\crefname{hypothesis}{Hypothesis}{Hypotheses}
\title{An element-wise RSAV algorithm for unconstrained optimization problems\thanks{Submitted to the editors DATE.
\funding{This work was funded by the National Science Foundation (DMS-2053746, DMS-2134209, ECCS-2328241, and OAC-2311848), and U.S. Department of Energy (DOE) Office of Science Advanced Scientific Computing Research program DE-SC0021142 and DE-SC0023161.}}}
\author{Shiheng Zhang \thanks{The two authors contributed equally to this paper. Department of Mathematics, Purdue University.}
\and Jiahao Zhang \footnotemark[2]
\and Jie Shen \thanks{ Department of Mathematics, Purdue University.(\email{shen7@purdue.edu})}
\and Guang Lin \thanks{Department of Mathematics, Purdue University.(\email{guanglin@purdue.edu})}}
\newcommand{\bx}{\boldsymbol{x}}
\begin{document}

\maketitle

% REQUIRED
\begin{abstract}
We present a novel optimization algorithm, element-wise relaxed scalar auxiliary variable (E-RSAV), that satisfies an unconditional energy dissipation law and exhibits improved alignment between the modified and the original energy. Our algorithm features rigorous proofs of linear convergence in the convex setting. Furthermore, we present a simple accelerated algorithm that improves the linear convergence rate to super-linear in the univariate case. We also propose an adaptive version of E-RSAV with Steffensen step size.  We validate the robustness and fast convergence of our algorithm through ample numerical experiments.
\end{abstract}

% REQUIRED
\begin{keywords}
optimization;  gradient descent; machine learning;  SAV; adaptive learning rate.
\end{keywords}

% REQUIRED
\begin{MSCcodes}
90C26, 68T99, 68W40.
\end{MSCcodes}

\section{Introduction}
Optimization of neural network parameters is an area of active research with significant progress in recent years. However, it continues to pose formidable challenges, mainly due to vanishing gradients \cite{goodfellow2016deep}, overfitting \cite{lecun2015deep}, and the necessity for adaptive learning rate methods to avoid convergence to local minima \cite{kingma2014adam, zeiler2012adadelta}. Several approaches, such as batch normalization \cite{ioffe2015batch} and adaptive gradient descent with energy (AEGD) \cite{liu2020aegd}, a relaxed scalar auxiliary variable (RSAV) \cite{liu2023efficient, zhang2022numerical}, have demonstrated promise in addressing some of these obstacles. The most commonly used approach for obtaining the update rule involves reducing a non-convex loss function, for instance, the mean square error, $f(\bx) = \frac{1}{N}\sum_{i=1}^N (y_i - \hat{y_i}(\bx))^2$ \cite{nielsen2015neural}. %We will implement our experiments with this kind of loss. 

In the realm of mathematical optimization, it is customary to investigate the feasibility of unconstrained minimization problems that take the form:
\begin{equation}\label{minP}
\min _{\bx \in \mathbb{R}^{n}} f(\bx)
\end{equation}
In this setting, we assume the function $f:  \mathbb{R}^{m} \rightarrow  \mathbb{R}$ is differentiable. Notably, this formulation represents a fundamental optimization problem that encompasses linear programming and least-square problems as particular cases. Furthermore, it has a complete analytical theory, as established in \cite{boyd2004convex}.% Numerical algorithms for 

In the 1980s, a connection between the unconstrained minimization problem \eqref{minP}, in whichthe target function $f(\bx)$ is to be minimized over $\mathbb{R}^n$, and an ordinary differential equation (ODE) problem was established \cite{brown1989some, saupe1989discrete, zufiria1990application}. Specifically, a gradient descent method for the problem \eqref{minP}, $\bx^{n+1} = \bx^{n} - \Delta t \nabla f(\bx(t))$, can be considered as a numerical scheme of a gradient flow
\begin{equation}\label{ode1}
\frac{d\bx}{dt} = - \nabla f(\bx(t)),
\end{equation}
where the initial point is $\bx(0)=\bx_0$. A minimizer $\bx^*$ of $f(\bx)$ is then obtained as $\bx^* = \lim_{t \rightarrow \infty } \bx(t)$, where $\bx(t)$ satisfies equation \eqref{ode1}. Recently, there has been significant research examining the connection between minimization problems and ODE problems, including investigations into Nesterov's accelerated gradient, NAG \cite{su2014differential}. In the domain of machine learning, NAG has risen to prominence as a robust optimization tool, underscoring the need for effective numerical methods for solving such problems.

Additionally, the equation \eqref{ode1} belongs to a notable class of ODEs known as gradient flows, which are ubiquitous in various fields such as fluid dynamics and material science problems \cite{allen1979microscopic, anderson1998diffuse, elder2002modeling}. It is desirable, sometimes necessary, for the numerical scheme to adhere to fundamental physical laws, including the energy dissipation law $\frac{df\left(\bx(t)\right)}{dt} \leq 0$. Certain contemporary literature has proposed several energy-dissipative numerical schemes, including the convex splitting schemes \cite{elliott1993global, eyre1998unconditionally, shen2012second}, stabilization methods \cite{shen2010numerical, zhu1999coarsening},\cite{yang2016linear, zhao2017numerical}, scalar auxiliary variable (SAV) methods \cite{shen2018convergence, shen2018scalar, shen2019new}, and invariant energy quadratization (IEQ) approaches. 

The treatment of the minimization problem as a gradient flow problem has gained popularity in optimization algorithms due to its robustness and generality. 
Recently, Liu \& Tian  \cite{liu2020aegd} developed  AEGD which applied the IEQ to the optimization process, and Liu, Shen \& Zhang \cite{liu2023efficient} applied the relaxed SAV technique to optimization. These methods ensure unconditional energy dissipation by introducing a kind of modified energy. 
However, the introduced modified energy may exhibit inconsistencies with the original energy, as the original energy may not necessarily monotonically decrease during iterations. Despite its potential, several challenges remain in the application of gradient flow methods to minimization problems. One of the challenges faced in designing optimization algorithms based on gradient flow is to maintain the physical law while designing the numerical scheme. Another challenge is to improve the convergence rate by selecting an appropriate step size. Avoiding oscillations in gradient descent methods is also a challenge that needs to be addressed. Additionally, incorporating an adaptive algorithm can help save computation costs. To improve the performance of optimization algorithms based on gradient flow, further research is needed to address these challenges.

Within the context of the SAV approach, a new variable $r = \sqrt{f(\bx(t))}$ is defined as the scalar auxiliary variable, and subsequently, an extended system needs to be solved.
\begin{comment} the equation \eqref{ode1} as 
\begin{align}
    &\frac{d\bx}{dt} = - \frac{r}{\sqrt{f(\bx(t))}} \nabla f(x(t)) \\
    &r_t = \frac{1}{2\sqrt{f(\bx(t))}} (\nabla f(\bx(t)),\frac{d\bx}{dt}),
\end{align}
where $(\cdot,\cdot)$ is the inner product in $\mathbb{R}^n$. Then, 
a first-order numerical scheme can be written as 
\begin{align}
&\bx^{n+1} = \bx^{n} - \Delta t \frac{{r}^{n+1}}{\sqrt{f(\bx^n)}} \nabla f(\bx^n),
\\
&{{r}^{n+1}-r^{n}} =\frac{1}{2 \sqrt{f(\bx^n)}}   (\nabla f(\bx^n),{\bx^{n+1}-\bx^{n}}).
\end{align}
At the continuous level, $r(t)=\sqrt{f(\bx(t))}$,\end{comment}
However, the numerical solution $r^{n+1}$ may deviate significantly from $\sqrt{f(\bx(t_{n+1}))}$. 
Drawing inspiration from the enhancement in IEQ and SAV presented in \cite{jiang2022improving}, the consistency between the modified and the original energy can be achieved by incorporating a relaxation step at the conclusion of each iteration. Based on this, we propose a new approach, element-wise RSAV (E-RSAV), in which the auxiliary variable is element-wise, allowing more flexibility in adjusting the learning rate element-wise. Importantly, the introduced modified energy remains dissipative unconditionally, where the modified and original energy are inherently connected.

%To improve the consistency between $r^{n+1}$ and $\sqrt{f(\bx^{n+1})}$, a correction step can be incorporated at the end of the algorithm. 
%Specifically, we add an update rule, $r^{n+1} := \eta r^{n+1} + (1-\eta)\sqrt{f(\bx^{n+1})}$, where $0\leq \eta \leq 1$. For high-dimensional problems, it only needs to replace $f'(x^n)$ with $\partial_i f(\bx^n)$, which enables the use of different learning rates\footnote{In this article, we adopt distinct terminologies for the step size and the learning rate. Specifically, The term, step size, is commonly used to refer to $\Delta t$, which often appears in the context of solving ordinary differential equations (ODEs). On the other hand, the coefficient preceding the search direction is known as the learning rate and is represented by $\Delta t \frac{{r}^{n+1}}{\sqrt{f(x^n)}}$. This approach allows for clear differentiation between the two concepts and facilitates a better understanding of the optimization process.} for each element of $\bx^n$.
More precisely, the unconditionally modified energy dissipation can be obtained for each element of the vector $\boldsymbol{x}$, which facilitates the use of adaptive step size during iteration. To achieve this, we define an indicator, $\alpha = \text{mean}(\frac{\boldsymbol{r}^n}{\sqrt{f(\bx^n)}})$ such that we can adjust the step size $\Delta t$ according to the indicator's deviation from 1 and with the Steffensen step size \cite{zhao2022stochastic}, leading to an adaptive version of the E-RSAV, which, hereafter referred as AE-RSAV, may avoid oscillation and accelerate the convergence. % We have also established that the modified energy $r_i^n$ is bounded below by a positive constant $C^*$ for $1\leq i\leq m$, given that the real energy is assumed to be bounded below and $f$ is L-smooth. Additionally, 
We will show that by selecting the appropriate step size, the real energy will also be dissipative, which allows us to prove that it converges linearly. We also show that the convergence rate can be accelerated to superlinear in the univariate case.  

In conclusion, our primary advancements include:
\begin{enumerate}
    \item We propose a novel optimizer, E-RSAV, designed to significantly improve the performance of RSAV in large-dimensional problems, particularly in the context of machine learning.
    \item We provide a rigorous proof of the convergence rate for the E-RSAV algorithm in the convex setting.
    \item We demonstrate that in the univariate case, the linear convergence rate can be elevated to a super-linear rate.
    \item We introduce an indicator to monitor the performance of the optimization process and propose the AE-RSAV algorithm, which incorporates the indicator and provides guidelines on how to modify the step size when the indicator exceeds a certain threshold.
    %% \item We introduce an AE-RSAV that combines the advantages of E-RSAV with improved performance in practice.
    \item Through numerical experiments, we demonstrate that our algorithm achieves high accuracy and fast convergence.
    %% \item We propose a gradient-based algorithm that allows for the application of any adaptive step size strategy.
\end{enumerate}
The structure of this article is outlined as below: Section 2 introduces the proposed E-RSAV algorithm. Section 3 presents the convergence analysis of RSAV and E-RSAV in the convex setting. We propose in Section 4 an enhanced E-RSAV algorithm and show that it has a superlinear convergence rate. In Section 5, several numerical experiments are presented to validate the effectiveness of the new algorithm, followed by some conclusions in Section 6. 

\section{The RSAV and element-wise RSAV algorithms}
We start by recalling the SAV and RSAV schemes introduced in \cite{liu2023efficient} for optimization problems, followed by the construction of element-wise  SAV and RSAV schemes. We also show that these schemes are unconditionally stable with the modified energy.
\subsection{SAV and relaxed SAV}
%We first recall the SAV and relaxed SAV schemes introduced in \cite{liu2023efficient} for optimization problems.Inspired by the IEQ and SAV approach \cite{yang2016linear,shen2018scalar}, 
Assuming, without loss of generality, $f(\bx)\ge\delta>0$ for all $\bx$, we can define an auxiliary scalar variable as $r(t) =  \sqrt{f(\bx(t))}$, subsequently extend the gradient flow \eqref{ode1} to:
\begin{align}
    &\frac{d\bx}{dt} +\mathcal{L}(t)\bx+ \frac{r}{\sqrt{f(\bx(t))}}(\nabla f(\bx(t))-\mathcal{L}(t)\bx)=0 \\
    &r_t = \frac{1}{2\sqrt{f(\bx(t))}} \nabla f(\bx(t))^T\frac{d\bx}{dt}.
\end{align}
If we consider $r(0) =  \sqrt{f(\bx|_{t=0})}$, then the solution $\bx$ of \eqref{ode1} along with $r(t) =  \sqrt{f(\bx)}$ also represents a solution pair for the above-expanded system. 

In general, we can split the cost function  as follows:
\begin{equation}\label{splitting}
f(\bx)= \frac{1}{2}(\mathcal{L}(t)\bx,\bx)+[f(\bx) - \frac{1}{2}(\mathcal{L}(t)\bx,\bx)],
 \end{equation} where $\mathcal{L}(t)$ is {a self-adjoint  positive semi-definite linear operator}.  In this paper, we mostly consider the trivial splitting  $\mathcal{L}(t)\equiv 0$ or $(\mathcal{L}(t)\bx)_i=\lambda_i(t) x_i, \,i=1,2,\cdots,m$ where $\boldsymbol{x}=(x_1, x_2, \cdots, x_i, \cdots, x_m)$. 
 Then, we
consider the following time discretization scheme for the expanded system:
\begin{align}\label{SAV1}
&\frac{\bx^{n+1}-\bx^{n}}{\Delta t}+\mathcal{L}^{n}(\bx^{n+1}-\bx^{n})+\frac{{r}^{n+1}}{\sqrt{f(\bx^n)}} \nabla f(\bx^n)=0,
\\ \label{SAV2}
&\frac{{r}^{n+1}-r^{n}}{\Delta t} =\frac{1}{2 \sqrt{f(\bx^n)}}  \nabla f(\bx^n)^{T} \frac{\bx^{n+1}-\bx^{n}}{\Delta t},
% &\frac{{r}^{n+1}-r^{n}}{\Delta t} =\frac{1}{2 \sqrt{f(\bx^n)}}  (\nabla f(\bx^n) - \mathcal{L}^{n}\bx^{n})^{T} \frac{\bx^{n+1}-\bx^{n}}{\Delta t}, 
\end{align}
where we assume $\mathcal{L}^n\approx \mathcal{L}(t_n)$ to be self-adjoint and positive semi-definite. 
In the following context, we will refer to $f(\bx^{n+1})$ as the ``original energy" and $(r^{n+1})^2$ as the ``modified energy". 

The above SAV scheme is very efficient, since the coupled system \eqref{SAV1}-\eqref{SAV2} can be decoupled into two linear systems of the subsequent structure \cite{shen2018scalar}:
\begin{equation}
    \alpha \bx +\mathcal{L}^{n}\bx  = \boldsymbol{h}.
\end{equation}
The scheme \eqref{SAV1}-\eqref{SAV2} is unconditional energy dissipative for the modified energy $({r}^{n+1})^2$. However, the equation used to compute ${r}^{n+1}$ has little correlation with $\sqrt{f(\bx^{n+1})}$, leading to inconsistencies between $\boldsymbol{r}^{n+1}$ and $\sqrt{f(\bx^{n+1})}$ in numerical experiments. 
To address this issue, we adopt a relaxation step \cite{jiang2022improving}  that strengthens the relationship between ${r}^{n+1}$ and $\sqrt{f(\bx^{n+1})}$. More precisely,  the RSAV scheme is as follows:
\begin{align}\label{RSAV1}
&\frac{\bx^{n+1}-\bx^{n}}{\Delta t}+\mathcal{L}^{n}(\bx^{n+1}-\bx^{n})+\frac{\tilde{r}^{n+1}}{\sqrt{f(\bx^n)}} \nabla f(\bx^n)=0,
\\ \label{RSAV2}
&\frac{\tilde{r}^{n+1}-r^{n}}{\Delta t} =\frac{1}{2 \sqrt{f(\bx^n)}}  \nabla f(\bx^n)^{T} \frac{\bx^{n+1}-\bx^{n}}{\Delta t},\\
% &\frac{{r}^{n+1}-r^{n}}{\Delta t} =\frac{1}{2 \sqrt{f(\bx^n)}}  (\nabla f(\bx^n) - \mathcal{L}^{n}\bx^{n})^{T} \frac{\bx^{n+1}-\bx^{n}}{\Delta t}, \\
&r^{n+1} = \eta_{0} \tilde{r}^{n+1} + (1-\eta_0)\sqrt{f(\bx^{n+1})},\label{RSAV3}
\end{align}
where, for a given $\psi\in (0,1)$, $\eta_{0}$ is the smallest number in $[0,1]$ such that
\begin{align}
(r^{n+1})^2 - (\tilde{r}^{n+1})^2 \leq \frac {\psi} {\Delta t} \|x^{n+1}-x^{n}\|^2, \label{RSAV5}
\end{align}
where $\psi$ is a parameter of our choice, and is usually set as $\psi=0.95$ in practice. We refer to \cite{jiang2022improving}  (see \eqref{etai0} below) for an explicit formula to determine $\eta_{0}$.
 %$\boldsymbol{r}^0 = \sqrt{f(\boldsymbol{x}^0)}(1,1,\cdots,1)$.

\subsection{Element-wise SAV and element-wise RSAV schemes}

The scheme \eqref{SAV1}-\eqref{SAV2}, in the case of $\mathcal{L}^{n}=0$, can be interpreted as a GD scheme with a single learning rate $\Delta t \frac{{r}^{n+1}}{\sqrt{f(\bx^n)}}$. However, it may converge slowly if the components of $ \nabla f(\bx^n)$ have large variations. In this case, it is preferable to have element-wise learning rates. To this end, we  modify the SAV scheme \eqref{SAV1}-\eqref{SAV2} into the following element-wise SAV scheme: 
\begin{align}\label{ESAV1}
&\frac{x_i^{n+1}-x_i^{n}}{\Delta t}+(\mathcal{L}^{n}(\bx^{n+1}-\bx^{n}))_i +\frac{{{r}}_i^{n+1}}{\sqrt{f(\boldsymbol{x}^n)}} \frac{\partial f(\boldsymbol{x}^n)}{\partial x_i}=0,\quad i=1,2,\cdots,m,
\\ \label{ESAV2}
&\frac{{{r}}_i^{n+1}-r_i^{n}}{\Delta t} =\frac{1}{2 \sqrt{f(\boldsymbol{x}^n)}} \frac{\partial f(\boldsymbol{x}^n)}{\partial x_i}  \frac{x_i^{n+1}-x_i^{n}}{\Delta t},\quad i=1,2,\cdots,m.
\end{align}
Note that with $\mathcal{L}^{n}=0$, the above scheme is essentially the same as the AEGD algorithm in \cite{liu2020aegd}.

Let $\boldsymbol{r}=(r_1, r_2, \cdots, r_i, \cdots, r_m)$ be denoted, with $(\cdot,\cdot)$ and $\|\cdot\|$ representing the inner product and norm, respectively, in $\mathbb{R}^m$.
\begin{theorem}\label{thm:esav}
     The E-SAV algorithm \eqref{ESAV1}-\eqref{ESAV2} is unconditionally energy dissipative, characterized by the fact that
     \begin{equation*}\label{sav:energy}
      \begin{aligned}
         \|\boldsymbol{r}^{n+1}\|^2 - \|\boldsymbol{r}^{n}\|^2 &=-\|\boldsymbol{r}^{n+1}-\boldsymbol{r}^{n}\|^2 \\
         &- (\mathcal{L}^{n}(\bx^{n+1}-\bx^n),\bx^{n+1}-\bx^{n}) -\frac1{\Delta t}\|\bx^{n+1}-\bx^{n}\|^2\le 0.
     \end{aligned}
     \end{equation*}
     In particular, if $(\mathcal{L}^{n}\bx)_i=\lambda_i^nx_i$ with $\lambda_i^n\ge 0$ for all $i,n$, we have the element-wise inequality
      \begin{equation*}\label{sav:e-energy}
    (r_i^{n+1})^2-(r_i^{n})^2\le -\frac1{\Delta t}(x_i^{n+1}-x_i^{n})^2 \leq 0,  \text{ for } 1 \leq i \leq m.
      \end{equation*}
\end{theorem}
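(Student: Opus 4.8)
The plan is to carry out the standard SAV-type ``energy estimate'' by testing the momentum equation \eqref{ESAV1} against the increment $\bx^{n+1}-\bx^n$ and then using the auxiliary update \eqref{ESAV2} to eliminate the resulting gradient term. First I would take the inner product of \eqref{ESAV1} with $\bx^{n+1}-\bx^n$ (equivalently, multiply the $i$-th component by $(x_i^{n+1}-x_i^n)$ and sum over $i$). This produces three contributions: the difference quotient yields $\frac{1}{\Delta t}\|\bx^{n+1}-\bx^n\|^2$; the operator term yields $(\mathcal{L}^n(\bx^{n+1}-\bx^n),\bx^{n+1}-\bx^n)$; and there remains a gradient term $\sum_i \frac{r_i^{n+1}}{\sqrt{f(\bx^n)}}\frac{\partial f(\bx^n)}{\partial x_i}(x_i^{n+1}-x_i^n)$, which is the only term not yet in a usable form.

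The crucial coupling step is to rewrite \eqref{ESAV2} as $r_i^{n+1}-r_i^n = \frac{1}{2\sqrt{f(\bx^n)}}\frac{\partial f(\bx^n)}{\partial x_i}(x_i^{n+1}-x_i^n)$, which shows that the gradient term above equals $2\sum_i r_i^{n+1}(r_i^{n+1}-r_i^n)$. I would then apply the polarization identity $2a(a-b)=a^2-b^2+(a-b)^2$ with $a=r_i^{n+1}$ and $b=r_i^n$, so that this term collapses to $\|\boldsymbol{r}^{n+1}\|^2-\|\boldsymbol{r}^n\|^2+\|\boldsymbol{r}^{n+1}-\boldsymbol{r}^n\|^2$. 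Substituting back and isolating $\|\boldsymbol{r}^{n+1}\|^2-\|\boldsymbol{r}^n\|^2$ reproduces exactly the claimed identity. The inequality is then immediate: the operator term is nonnegative because $\mathcal{L}^n$ is self-adjoint and positive semi-definite, while $-\|\boldsymbol{r}^{n+1}-\boldsymbol{r}^n\|^2$ and $-\frac{1}{\Delta t}\|\bx^{n+1}-\bx^n\|^2$ are manifestly nonpositive.

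For the element-wise inequality I would repeat the same computation but without summing over $i$. Since $(\mathcal{L}^n\bx)_i=\lambda_i^n x_i$ is diagonal, the estimate decouples across coordinates: multiplying the $i$-th component of \eqref{ESAV1} by $(x_i^{n+1}-x_i^n)$ and invoking \eqref{ESAV2} for that same index gives $(r_i^{n+1})^2-(r_i^n)^2 = -(r_i^{n+1}-r_i^n)^2-\lambda_i^n(x_i^{n+1}-x_i^n)^2-\frac{1}{\Delta t}(x_i^{n+1}-x_i^n)^2$. Dropping the first two nonnegative terms (here using $\lambda_i^n\ge 0$) yields the stated bound $(r_i^{n+1})^2-(r_i^n)^2\le -\frac{1}{\Delta t}(x_i^{n+1}-x_i^n)^2\le 0$.

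I do not expect a genuine obstacle here: the argument is a routine discrete energy computation. The only point requiring care is the bookkeeping—correctly pairing each scheme equation with the right test quantity and recognizing that the otherwise intractable inner product $(\nabla f(\bx^n),\bx^{n+1}-\bx^n)$ telescopes precisely because \eqref{ESAV2} is engineered to act as its discrete time derivative. Once that cancellation is in place, the positive semi-definiteness of $\mathcal{L}^n$ (and the diagonal structure in the element-wise case) does all the remaining work.
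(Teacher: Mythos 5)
Your proposal is correct and follows essentially the same route as the paper: testing \eqref{ESAV1} with $x_i^{n+1}-x_i^n$, using \eqref{ESAV2} (equivalently, multiplying it by $2r_i^{n+1}\Delta t$) to collapse the gradient term via the identity $2a(a-b)=a^2-b^2+(a-b)^2$, and obtaining the element-wise bound by simply not summing over $i$ in the diagonal case. The bookkeeping and the use of positive semi-definiteness of $\mathcal{L}^n$ match the paper's argument exactly.
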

\begin{proof}
Multiplying \eqref{ESAV1} (resp.  \eqref{ESAV2}) with ${x_i^{n+1}-x_i^{n}}$ (resp. $2r_i^{n+1}{\Delta t}$) and taking the sum of the results, we derive
 \begin{equation}\label{energyi}
     \begin{aligned}
         ({r}_i^{n+1})^2 - ({r}_i^{n})^2 &+({r}_i^{n+1}-{r}_i^{n})^2 \\&=
          -(\mathcal{L}^{n}(\bx^{n+1}-\bx^{n}))_i\cdot (x_i^{n+1}-x_i^{n})-\frac1{\Delta t}(x_i^{n+1}-x_i^{n})^2.
     \end{aligned}
 \end{equation}
Summing up \eqref{energyi} for $i=1,2,\cdots,m$, we derive
  \begin{equation*}
     \begin{aligned}
         &\|\boldsymbol{r}^{n+1}\|^2 - \|\boldsymbol{r}^{n}\|^2 + \|\boldsymbol{r}^{n+1}-\boldsymbol{r}^{n}\|^2 \\
         &=- (\mathcal{L}^{n}(\bx^{n+1}-\bx^n),\bx^{n+1}-\bx^{n}) -\frac1{\Delta t}\|\bx^{n+1}-\bx^{n}\|^2.
     \end{aligned}
 \end{equation*}
 \end{proof}
 % Besides that, we can also get $\tilde{r}_i^{n+1} \geq 0$ and $\tilde{r}_i^{n+1} \leq r_i^n$, so $r_i^{n+1} = \eta \tilde{r}_i^{n+1} + (1-\eta)\sqrt{f(x^{n+1})} \geq 0$. The following theorem gives more details.

\begin{remark}
For general $\mathcal{L}^n$, the components $(x_1^{n+1},\cdots, x_m^{n+1})$ in \eqref{ESAV1}-\eqref{ESAV2} are coupled. However, if $(\mathcal{L}^n\bx)_i =\lambda_i^n x_i$, then $\{x_i^{n+1}\}$ in  \eqref{ESAV1}-\eqref{ESAV2} are decoupled from each other, and can be efficiently solved.
\end{remark}

Accordingly, we can construct the element-wise RSAV scheme as follows: For $i=1,2,\cdots,m$,
\begin{align}
&\frac{x_i^{n+1}-x_i^{n}}{\Delta t} +(\mathcal{L}^{n}(\bx^{n+1}-\bx^{n}))_i+\frac{{\tilde{r}}_i^{n+1}}{\sqrt{f(\boldsymbol{x}^n)}} \frac{\partial f(\boldsymbol{x}^n)}{\partial x_i}=0, \label{elealg1}
\\
&\frac{{\tilde{r}}_i^{n+1}-r_i^{n}}{\Delta t} =\frac{1}{2 \sqrt{f(\boldsymbol{x}^n)}} \frac{\partial f(\boldsymbol{x}^n)}{\partial x_i}  \frac{x_i^{n+1}-x_i^{n}}{\Delta t}, \label{elealg2} \\
&r_i^{n+1} = \eta_{i0} \tilde{r}_i^{n+1} + (1-\eta_{i0})\sqrt{f(\bx^{n+1})},\label{elealg3}
\end{align}
where, for a given $\psi\in (0,1)$, $\eta_{i0}$ is the smallest number in $[0,1]$ such that
\begin{align}
(r_i^{n+1})^2 - (\tilde{r}_i^{n+1})^2 \leq \frac{\psi}{\Delta t} \left({x_i^{n+1}-x_i^{n}}\right)^2. \label{elealg5}
\end{align}
Following \cite{jiang2022improving}, $\eta_{i0}$ can be determined as follows:
\begin{equation*}
\begin{split}
  &  \eta_{i0} = \min_{\eta_i \in [0,1]} \eta_i,  \text{ such that } \\&(\eta_{i} \tilde{r}_i^{n+1} + (1-\eta_i)\sqrt{f(\bx^{n+1})})^2 - (\tilde{r}_i^{n+1})^2 \leq  \frac {\psi} {\Delta t} \left({x_i^{n+1}-x_i^{n}}\right)^2,
  \end{split}
\end{equation*}
which can be reduced to
\begin{equation}\label{etai0}
    \eta_{i0} = \min_{\eta_i \in [0,1]} \eta_i,  \text{ such that } a \eta_i^2 + b \eta_i + c \leq 0,
\end{equation}
where 
\begin{equation}\label{eq:quadratic solution}
    \begin{aligned}
        & a = ( \sqrt{f(\bx^{n+1})} - \tilde{r}_i^{n+1} )^2, \\
        & b = 2\sqrt{f(\bx^{n+1})} (\tilde{r}_i^{n+1} - \sqrt{f(\bx^{n+1})} ),\\
        & c = f(\bx^{n+1}) - (\tilde{r}_i^{n+1})^2 -  \frac{\psi}{\Delta t} ({x_i^{n+1}-x_i^{n}})^2.
    \end{aligned}
\end{equation}
If  $a= 0$, i.e., $\tilde{r}_i^{n+1} = \sqrt{f(\bx^{n+1})}$,  we set $\eta_{i0}=0$. Otherwise,  the solution to the problem \eqref{etai0} can be written as 
\begin{equation}
    \eta_{i0} = \max\left\{\frac{-b-\sqrt{b^2-4ac}}{2a}, 0 \right\}.
\end{equation}
It is easy to check that $b^2-4ac \geq 0$ for any $\Delta t$.\\
Assuming that $(\mathcal{L}^n\bx)_i =\lambda_i^n x_i$, the E-RSAV algorithm is given in \cref{alg:E-RSAVelewise}. 
\begin{algorithm}
\caption{E-RSAV}
\label{alg:E-RSAVelewise}
\begin{algorithmic}
\STATE{$\mathbf{Given}$ a starting point $\boldsymbol{x}^0 \in \mathbf{dom} f$, a step size $\Delta t$, $\boldsymbol{r}^0 = \sqrt{f(\boldsymbol{x}^0)}(1,1,\cdots,1)$ and set $n = 0$, $\psi \in (0,1)$.}
\WHILE{the termination condition is not met}
\STATE{Compute $\tilde{{r}}_i^{n+1} = (1+\frac{\Delta t}{2(1 + \Delta t\lambda_i^n)f(\boldsymbol{x}^n)}(\frac{\partial f(\boldsymbol{x}^n)}{\partial x_i})^2)^{-1}{r}_i^n$} for $i=1,\cdots,m$
\STATE{Update ${x}_i^{n+1} = {x}_i^n - \Delta t (1+\Delta t \lambda_i^n)^{-1} \frac{\tilde{{r}}_i^{n+1}}{ \sqrt{f(\boldsymbol{x}^n)}}\frac{\partial f(\boldsymbol{x}^n)}{\partial x_i}$ for $i=1,\cdots,m$}
\STATE{Set $r_i^{n+1} = \eta_{i} \tilde{r}_i^{n+1} + (1-\eta_i)\sqrt{f(\bx^{n+1})}$ for $i=1,\cdots,m$}
\STATE{Compute $ {\eta}_{i0} = \min_{\eta_i \in [0,1]} \eta_i,  \text{ such that } ({r}_i^{n+1})^2 - (\tilde{{r}}_i^{n+1})^2 \leq \frac{\psi}{\Delta t} \left({{{x}_i^{n+1}-{x}_i^{n}}}\right)^2$ for $i=1,\cdots,m$}
\STATE{Update ${r}_i^{n+1} = {\eta}_{i0} \tilde{{r}}_i^{n+1} + (1-{\eta}_{i0})\sqrt{f(\boldsymbol{x}^{n+1})} $ for $i=1,\cdots,m$}
\STATE{Update $n=n+1$}
\ENDWHILE
\RETURN $\boldsymbol{x}^{n+1}$
\end{algorithmic}
\end{algorithm}

\begin{theorem}\label{thm:esav true}
     The E-RSAV algorithm \eqref{elealg1}-\eqref{elealg3} is unconditionally energy dissipative, characterized by the fact that
     \begin{equation}\label{esav-energy}
   \|\boldsymbol{r}^{n+1}\|^2    - \|\boldsymbol{r}^{n}\|^2 \le -(\mathcal{L}^{n}(\bx^{n+1}-\bx^n),\bx^{n+1}-\bx^{n}) -\frac{1-\psi}{\Delta t}\|\bx^{n+1}-\bx^n\|^2\le 0. 
     \end{equation}
  In particular, if $(\mathcal{L}^{n}\bx)_i=\lambda_i^nx_i$ with $\lambda_i^n\ge 0$ for all $i,n$, we have the element-wise inequality
      \begin{equation}\label{esav:e-energy}
    (r_i^{n+1})^2-(r_i^{n})^2\le-\frac{1-\psi}{\Delta t}(x_i^{n+1}-x_i^{n})^2 \leq 0,  \text{ for } 1 \leq i \leq m.
      \end{equation}
\end{theorem}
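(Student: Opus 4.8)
The plan is to reduce everything to the unrelaxed E-SAV identity and then absorb the relaxation step through its defining constraint \eqref{elealg5}. First I would observe that \eqref{elealg1}-\eqref{elealg2} have exactly the same structure as the E-SAV scheme \eqref{ESAV1}-\eqref{ESAV2}, only with $r_i^{n+1}$ replaced by the intermediate variable $\tilde{r}_i^{n+1}$. Hence, repeating verbatim the computation in the proof of \cref{thm:esav} — multiplying \eqref{elealg1} by $x_i^{n+1}-x_i^n$ and \eqref{elealg2} by $2\tilde{r}_i^{n+1}\Delta t$ and adding — yields the element-wise analogue of \eqref{energyi},
\begin{equation*}
(\tilde{r}_i^{n+1})^2 - (r_i^{n})^2 + (\tilde{r}_i^{n+1}-r_i^{n})^2 = -(\mathcal{L}^{n}(\bx^{n+1}-\bx^{n}))_i (x_i^{n+1}-x_i^{n}) - \frac{1}{\Delta t}(x_i^{n+1}-x_i^{n})^2.
\end{equation*}
Discarding the nonnegative square $(\tilde{r}_i^{n+1}-r_i^{n})^2$ gives the one-sided bound with $-\frac{1}{\Delta t}(x_i^{n+1}-x_i^{n})^2$ on the right.

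Next I would bring in the relaxation. The whole point of selecting $\eta_{i0}$ through \eqref{elealg5} is that, by construction, $(r_i^{n+1})^2 - (\tilde{r}_i^{n+1})^2 \le \frac{\psi}{\Delta t}(x_i^{n+1}-x_i^{n})^2$ holds. Adding this inequality to the one-sided bound telescopes the tilde terms and produces
\begin{equation*}
(r_i^{n+1})^2 - (r_i^{n})^2 \le -(\mathcal{L}^{n}(\bx^{n+1}-\bx^{n}))_i (x_i^{n+1}-x_i^{n}) - \frac{1-\psi}{\Delta t}(x_i^{n+1}-x_i^{n})^2,
\end{equation*}
the crucial cancellation being $-\frac{1}{\Delta t}+\frac{\psi}{\Delta t}=-\frac{1-\psi}{\Delta t}$.

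Finally I would assemble the two claimed statements. Summing the last display over $i=1,\dots,m$ gives the global estimate \eqref{esav-energy}; since $\psi\in(0,1)$ the coefficient $1-\psi$ is positive, and because $\mathcal{L}^n$ is self-adjoint and positive semi-definite the quadratic form $(\mathcal{L}^n(\bx^{n+1}-\bx^n),\bx^{n+1}-\bx^n)$ is nonnegative, so the right-hand side is $\le 0$. For the diagonal case $(\mathcal{L}^n\bx)_i=\lambda_i^n x_i$ with $\lambda_i^n\ge 0$, the coupling term reduces to $\lambda_i^n(x_i^{n+1}-x_i^n)^2\ge 0$, which I simply drop to obtain the element-wise bound \eqref{esav:e-energy}.

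The computation is essentially routine; the only points demanding care are bookkeeping rather than genuine obstacles: keeping the correct sign when applying the relaxation constraint (it is an upper bound on $(r_i^{n+1})^2-(\tilde{r}_i^{n+1})^2$, hence it is added, not subtracted), and confirming that \eqref{elealg5} is genuinely feasible for every $\Delta t$ so that $r_i^{n+1}$ is well defined — which is precisely the remark $b^2-4ac\ge 0$ already recorded after \eqref{eq:quadratic solution}.
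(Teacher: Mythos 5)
Your proposal is correct and follows essentially the same argument as the paper: derive the element-wise identity for $\tilde{r}_i^{n+1}$ exactly as in the proof of \cref{thm:esav}, add the relaxation constraint \eqref{elealg5} to replace $-\frac{1}{\Delta t}$ by $-\frac{1-\psi}{\Delta t}$, then specialize to the diagonal case and sum over $i$. Your closing observation that feasibility of \eqref{elealg5} is guaranteed by $b^2-4ac\ge 0$ is a nice extra touch the paper's proof leaves implicit, but the route is otherwise identical.
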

\begin{proof}
The proof is exact as in Theorem \ref{thm:esav}, instead of \eqref{energyi}, we can obtain
 \begin{equation*}\label{energyi2}
     \begin{aligned}
         &(\tilde{r}_i^{n+1})^2 - ({r}_i^{n})^2 +(\tilde{r}_i^{n+1}-{r}_i^{n})^2 =
          -(\mathcal{L}^{n}(\bx^{n+1}-\bx^{n}))_i\cdot (x_i^{n+1}-x_i^{n})-\frac1{\Delta t}(x_i^{n+1}-x_i^{n})^2.
     \end{aligned}
 \end{equation*}
 Hence, summing up the above with \eqref{elealg5}, we find
 \begin{equation}\label{energyi3}
     \begin{aligned}
         &({r}_i^{n+1})^2 - ({r}_i^{n})^2 \le
          -\mathcal{L}^{n}(\bx^{n+1}-\bx^{n}))_i\cdot (x_i^{n+1}-x_i^{n})-\frac{1-\psi}{\Delta t}(x_i^{n+1}-x_i^{n})^2,
     \end{aligned}
 \end{equation}
which implies \eqref{esav:e-energy} if $(\mathcal{L}^{n}\bx)_i=\lambda_i^nx_i$ with $\lambda_i^n\ge 0$.
Summing up \eqref{energyi3} for $i=1,2,\cdots,m$, we obtain \eqref{esav-energy}.
 \end{proof}
 % Besides that, we can also get $\tilde{r}_i^{n+1} \geq 0$ and $\tilde{r}_i^{n+1} \leq r_i^n$, so $r_i^{n+1} = \eta \tilde{r}_i^{n+1} + (1-\eta)\sqrt{f(x^{n+1})} \geq 0$. The following theorem gives more details.
 
\section{Convergence analysis of the RSAV and E-RSAV schemes}
In this section, we assume $f(\bx)$ to be $L$-smooth (see definition below) and carry out a convergence analysis for the RSAV and E-RSAV schemes. 
We note that for the special case of $\mathcal{L}\equiv 0$, the rate at which both SAV and E-SAV schemes converge was established in \cite{liu2020aegd}. We also note that in \cite{liu2023efficient}, the SAV scheme was formulated as a line search method and some convergence criteria were derived.

\begin{definition}
     A function $f$ is $L$-smooth if there is a non-negative constant $L$ with  $\norm{\nabla f(x)-\nabla f(y)} \leq L\norm{x-y}$ holding for all $x,y \in \mathbf{R}^n$, i.e., $\nabla f$ is $L$-Lipschitz continuous.
\end{definition}
%It is well-known \cite{} that the GD algorithm converges linearly with the learning rate 

\subsection{Positive lower bound of $r^n$ for the RSAV scheme}
 The convergence theory of gradient descent emphasizes the importance of maintaining the learning rate above a certain positive constant. As evidenced in \cite{liu2020aegd}, for the SAV scheme \eqref{SAV1}-\eqref{SAV2} when $\mathcal{L}\equiv 0$ and $f$ is $L$-smooth, the term $r^n$ remains bounded above a positive constant. We show below that this is also true for the RSAV scheme \eqref{RSAV1}-\eqref{RSAV3}.

  Let us denote $g(\bx) = \sqrt{f(\bx)}$, and it can be readily demonstrated that $g(\bx)$ is also bounded below with a positive constant $\sqrt{\delta}$. We can rewrite \eqref{RSAV1}-\eqref{RSAV2} as 
  \begin{align}\label{eq:rsav1}
&\frac{\bx^{n+1}-\bx^{n}}{\Delta t}+\mathcal{L}^{n}(\bx^{n+1}-\bx^{n})+2{\tilde{r}^{n+1}} \nabla g(\bx^n)=0,
\\ \label{eq:rsav2}
&\frac{\tilde{r}^{n+1}-r^{n}}{\Delta t} = \nabla g(\bx^n)^{T} \frac{\bx^{n+1}-\bx^{n}}{\Delta t},\\
% &\frac{{r}^{n+1}-r^{n}}{\Delta t} =\frac{1}{2 \sqrt{f(\bx^n)}}  (\nabla f(\bx^n) - \mathcal{L}^{n}\bx^{n})^{T} \frac{\bx^{n+1}-\bx^{n}}{\Delta t}, \\
&r^{n+1} = \eta_{0} \tilde{r}^{n+1} + (1-\eta_0){g(\bx^{n+1})}.\label{eq:rsav3}
\end{align}

\begin{theorem}\label{thm:3.2}
    Suppose that $f$ is L-smooth, and let $r^n$ be generated by the RSAV scheme \eqref{RSAV1}-\eqref{RSAV3} with $\mathcal{L}$ being positive semi-definite. Then  $\lim_{n\rightarrow \infty} \bx^n=\bx^*$, and there exists a positive constant $C_1$ such that for $\Delta t\le C_1$, we then obtain 
    $$\lim_{n \rightarrow \infty} r^n =r^* \geq \frac{\sqrt{\delta}}{2}>0, \;\text{ and }  \nabla f(\bx^*)=0.$$
\end{theorem}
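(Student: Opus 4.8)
The plan is to combine the unconditional energy dissipation (the scalar analogue of Theorem~\ref{thm:esav true}) with the $L$-smoothness of $f$ to control the auxiliary variable $r^n$, and then read off stationarity. First I would record the dissipation law $(r^{n+1})^2-(r^n)^2\le-\frac{1-\psi}{\Delta t}\|\bx^{n+1}-\bx^n\|^2\le 0$, which shows $(r^n)^2$ is non-increasing; since $r^n>0$ for all $n$ (an easy induction using $\tilde r^{n+1}>0$ and $g(\bx^{n+1})\ge\sqrt{\delta}$ in \eqref{eq:rsav3}), $r^n$ decreases to a limit $r^*\ge 0$. Summing the dissipation law gives $\sum_n \frac1{\Delta t}\|\bx^{n+1}-\bx^n\|^2\le \frac{(r^0)^2}{1-\psi}<\infty$, hence $\|\bx^{n+1}-\bx^n\|\to 0$.

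Next I would set up the estimates in the $g=\sqrt{f}$ variables of \eqref{eq:rsav1}--\eqref{eq:rsav3}. A standard consequence of $L$-smoothness together with $f\ge\delta$ is the uniform bound $\|\nabla g(\bx)\|^2=\frac{\|\nabla f(\bx)\|^2}{4f(\bx)}\le \frac{2L(f(\bx)-\inf f)}{4f(\bx)}\le \frac L2$; in particular $g$ is Lipschitz. Writing $M_n=(I+\Delta t\,\mathcal L^n)^{-1}$ (symmetric positive definite with eigenvalues in $(0,1]$), equations \eqref{eq:rsav1}--\eqref{eq:rsav2} give $\bx^{n+1}-\bx^n=-2\Delta t\,\tilde r^{n+1}M_n\nabla g(\bx^n)$ and the scalar identity $\tilde r^{n+1}=r^n/(1+2\Delta t\,a_n)$ with $a_n=\nabla g(\bx^n)^T M_n\nabla g(\bx^n)\in[0,L/2]$. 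Two facts then follow that I will lean on: the relaxation \eqref{eq:rsav3} always yields $r^{n+1}\ge \min(\tilde r^{n+1},\sqrt{\delta})$, and whenever $r^{n+1}<\sqrt{\delta}$ one necessarily has $r^{n+1}\ge \tilde r^{n+1}$ (the minimiser $\eta_0$ cannot be $0$, so the binding relaxation can only raise $r$); moreover $\|\bx^{n+1}-\bx^n\|\le \sqrt{2L}\,\Delta t\, r^n$, so the relative change $s_n:=2\|\bx^{n+1}-\bx^n\|^2/(\Delta t\,(r^n)^2)$ obeys $s_n\le 4L\Delta t$.

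The crux, and the step I expect to be the main obstacle, is the uniform positive lower bound $r^*\ge \frac{\sqrt{\delta}}{2}$. The difficulty is that the naive one-step estimate $\tilde r^{n+1}\ge r^n/(1+L\Delta t)$ yields only geometric decay and, iterated over infinitely many steps, collapses to $0$; to obtain a genuine lower bound one must show the accumulated decrease is finite, i.e. essentially that $\sum_n \Delta t\, a_n<\infty$, which is entangled with the lower bound itself. My plan is a bootstrapping argument exploiting the smallness of $\Delta t$ and the restoring effect of the relaxation: choosing $C_1$ so that $4LC_1$ is small, I would prove by induction that $r^n\ge \frac{\sqrt{\delta}}{2}$, using that while $r^n\in[\frac{\sqrt{\delta}}{2},\sqrt{\delta})$ the multiplicative step $\tilde r^{n+1}/r^n=\frac{1}{2}(1+\sqrt{1-s_n})$ is close to $1$ (since $s_n\le 4L\Delta t$), while the relaxation pulls $r^{n+1}$ back toward $g(\bx^{n+1})\ge\sqrt{\delta}$. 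Summability of $\|\bx^{n+1}-\bx^n\|^2/\Delta t$ forces $a_n\to 0$, so the decay factors tend to $1$ and, for $\Delta t\le C_1$, the total descent below $\sqrt{\delta}$ stays within a factor $2$; this accounting is the delicate part to be carried out carefully.

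Finally, once $r^*\ge \frac{\sqrt{\delta}}{2}>0$ is in hand the rest is routine. A descent estimate for $f$ along \eqref{eq:rsav1} (using $L$-smoothness and $\Delta t\le C_1$) gives $\sum_n \Delta t\,\tilde r^{n+1}a_n<\infty$; since now $\tilde r^{n+1}\ge \frac{\sqrt{\delta}}{4}$ is bounded below, this forces $a_n\to 0$, hence $\|\nabla g(\bx^n)\|\to 0$ and so $\|\nabla f(\bx^n)\|=2g(\bx^n)\|\nabla g(\bx^n)\|\to 0$ along the bounded trajectory. The limit $r^*$ exists by monotonicity, and any accumulation point $\bx^*$ of $\{\bx^n\}$ is stationary; under the standing assumptions this identifies $\lim_n \bx^n=\bx^*$ with $\nabla f(\bx^*)=0$.
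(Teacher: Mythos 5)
Your groundwork (the dissipation law, monotonicity of $r^n$, summability of $\|\bx^{n+1}-\bx^n\|^2/\Delta t$, the explicit formula $\tilde r^{n+1}=r^n/(1+2\Delta t\,a_n)$, and the convex-combination facts about the relaxation) matches the paper's, and you correctly isolate the crux: the naive bound $\tilde r^{n+1}\ge r^n/(1+L\Delta t)$ compounds to zero, so one effectively needs $\sum_n \Delta t\,a_n<\infty$, which is entangled with the lower bound itself. But your plan for breaking this circularity has a genuine hole. First, ``summability of $\|\bx^{n+1}-\bx^n\|^2/\Delta t$ forces $a_n\to 0$'' is itself circular: since $\|\bx^{n+1}-\bx^n\|^2=4\Delta t^2(\tilde r^{n+1})^2\|M_n\nabla g(\bx^n)\|^2$, summability only yields $\tilde r^{n+1}\sqrt{a_n}\to 0$, which says nothing about $a_n$ unless $\tilde r^{n+1}$ is already bounded below. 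Second, even granting $a_n\to 0$, per-step factors tending to $1$ do not prevent the product $\prod_n(1+2\Delta t\,a_n)^{-1}$ from vanishing when $\sum_n a_n=\infty$, and the ``restoring effect'' of the relaxation cannot rescue this: the constraint \eqref{RSAV5} caps the upward correction of $(r^{n+1})^2$ over $(\tilde r^{n+1})^2$ by $\frac{\psi}{\Delta t}\|\bx^{n+1}-\bx^n\|^2\approx 4\psi\Delta t\,(\tilde r^{n+1})^2 a_n$, which is at most a fraction $\psi$ of the per-step loss $(r^n)^2-(\tilde r^{n+1})^2\approx 4\Delta t\,a_n(r^n)^2$, so the net decrement of $(r^n)^2$ remains of order $(1-\psi)\Delta t\,a_n(r^n)^2$ and no per-step balance between decay and restoration can produce a uniform lower bound. (Also, your identity $\tilde r^{n+1}/r^n=\tfrac12(1+\sqrt{1-s_n})$ holds only for $\mathcal L^n=0$, since $\|M_n\nabla g\|^2\neq\nabla g^T M_n\nabla g$ in general.)

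What closes the gap in the paper is a mechanism your sketch never invokes: the decrement of $r$ tracks the decrement of $g=\sqrt f$ exactly, up to a summable quadratic remainder. From \eqref{eq:rsav2} and Taylor, $\tilde r^{n+1}-r^n=g(\bx^{n+1})-g(\bx^n)-\tfrac12(\bx^{n+1}-\bx^n)^T\nabla^2 g(\xi)(\bx^{n+1}-\bx^n)$; combining with the relaxation identity $r^n-g(\bx^n)=\eta_0(\tilde r^n-g(\bx^n))$ from \eqref{eq:rsav3} and telescoping all the way back to the exact initialization $r^0=g(\bx^0)$ (which you never use) gives $|r^N-g(\bx^N)|\le \eta_0\frac{L_g\Delta t}{2(1-\psi)}\left((r^0)^2-(r^N)^2\right)=O(\Delta t)$ uniformly in $N$, because the accumulated quadratic remainders are bounded by the telescoping energy drop. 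Together with the summed descent inequality for $g$, this yields $r^*\ge g(\bx^*)-O(\Delta t)\ge\tfrac12\sqrt{\delta}$ for $\Delta t\le C_1$: the point is that $r^n$ can never stray more than $O(\Delta t)$ from $g(\bx^n)\ge\sqrt{\delta}$, so it cannot decay to zero regardless of whether $\sum_n a_n$ converges. Your bootstrap, as stated, leaves the uniform lower bound (equivalently $\sum_n\Delta t\,a_n<\infty$) unproven, and the ``delicate accounting'' you defer is precisely the missing step; the final paragraph (stationarity of the limit) is fine once the lower bound is available.
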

\begin{proof}
First, it is easy to show that for a $L$-smooth function $f$, if $f$ has a positive lower bound $\delta$, then $g$ is $L_g$-smooth with $L_g=\frac{L}{2\delta}$. 

Taking the inner product with $(\bx^{n+1}-\bx^{n})$ of the equation \eqref{eq:rsav1} and combining it with equation \eqref{eq:rsav2}, we obtain
\begin{equation*}
   \left(\tilde{r}^{n+1}\right)^2 - \left(r^n\right)^2 + \left(\tilde{r}^{n+1}-r^n\right)^2 
 = -\frac{1}{\Delta t}\norm{\bx^{n+1}-\bx^{n}}^2 - \left(\mathcal{L}^{n}(\bx^{n+1}-\bx^n),\bx^{n+1}-\bx^{n}\right).
\end{equation*}
Summing up the above with \eqref{RSAV5}, since $\mathcal{L}^n$ is positive semi-definite, we can obtain
\begin{align}\label{eq:decreasing}
     \norm{\bx^{n+1}-\bx^{n}}^2 \leq \frac{\Delta t}{1-\psi}\left( \left(r^n\right)^2 - \left({r}^{n+1}\right)^2 \right).
\end{align}
Hence, $(r^{n+1})^2$ is a decreasing sequence and will converge to $(r^*)^2$ for some $r^*\ge 0$. It remains to show $r^*> 0$. 

Taking the sum of the aforementioned for$n=0,1,2,\cdots$, we find
\begin{align}\label{eq:decreasing2}
  \sum_{n=0}^\infty   \norm{\bx^{n+1}-\bx^{n}}^2 \leq \frac{\Delta t}{1-\psi}\left( \left(r^0\right)^2 - \left({r}^{*}\right)^2 \right).
\end{align}
Hence, $\lim_{n\rightarrow \infty} \bx^n=\bx^*. $
On the other hand, we derive from \eqref{eq:rsav1}-\eqref{eq:rsav2} that
\begin{equation}\label{explit_rn}
    \tilde r^{n+1}=\frac{r^n}{1+2\Delta t\nabla g(\bx^n)^T(I+\Delta t\mathcal{L}^n)^{-1}\nabla g(\bx^n)}.
\end{equation}
Hence we have $\tilde r^{n+1}\ge 0$ if $r^0\ge 0$.
Furthermore, we observe from \eqref{RSAV3} that $r^{n+1}$ is actually a convex combination of $\tilde{r}^{n+1}$ and  $g(\bx^{n+1})$. Hence $ r^{n+1}\ge 0$ and it is also a decreasing sequence.

Without loss of generality, let's consider a positive integer $N$ such that the inequality $r^n \leq \sqrt{\delta}$ holds for all $n \geq N$. If this were not the case, we could logically infer that $r^* \geq \sqrt{\delta}$. As a result, for every $n \geq N$, we can consequently derive:
\begin{equation}\label{tilden}
  0\le   \tilde{r}^{n} \leq r^{n} \leq g(\bx^{n}).
\end{equation}
For any $ n \geq N$, we derive from \eqref{eq:decreasing}, Taylor expansion, \eqref{eq:rsav2} and \eqref{tilden} that
\begin{align}
    g(\bx^{n+1}) &\leq g(\bx^n) + \nabla g(\bx^n)\left(\bx^{n+1} - \bx^n \right) + \frac{L_g}{2}\norm{\bx^{n+1}-\bx^n}^2\\
    &\leq g(\bx^n) + \tilde{r}^{n+1}-r^{n} + \frac{L_g}{2}\norm{\bx^{n+1}-\bx^n}^2 \nonumber\\
    &\leq g(\bx^n) + {r}^{n+1}-r^{n} + \frac{\Delta t L_g}{2\left(1-\psi\right)}\left( \left(r^n\right)^2 - \left({r}^{n+1}\right)^2 \right).
\nonumber\end{align}
Summing up the above from $N$ to $K$, we obtain
\begin{equation}
      g(\bx^{K+1}) \leq g(\bx^N) + {r}^{K+1}-r^{N} + \frac{\Delta t L_g}{2\left(1-\psi\right)}\left( \left(r^N\right)^2 - \left({r}^{K+1}\right)^2 \right).
\end{equation}
Let $K$ go to $+\infty$ in the above, since $g(\bx^*)\leq g(\bx^{K+1})$,  we obtain
\begin{equation}
      g(\bx^*) \leq g(\bx^N) + {r}^{*}-r^{N} + \frac{\Delta t L_g}{2\left(1-\psi\right)}\left( \left(r^N\right)^2 - \left({r}^{*}\right)^2 \right),
\end{equation}
from which we can derive
\begin{equation}\label{eq:rstar}
      {r}^{*} \geq g(\bx^*) + r^{N} - g(\bx^N) - \frac{\Delta t L_g}{2\left(1-\psi\right)} \left(r^N\right)^2.
\end{equation}
Next we bound the difference between $r^N$ and $g(\bx^N)$. We derive from \eqref{eq:rsav3} that
\begin{align}\label{eq:distance}
&r^N-g\left(\bx^N\right)=\eta_0 \tilde{r}^N-\eta_0 g\left(\bx^N\right)=\eta_0\left(\tilde{r}^N-g\left(\bx^N\right)\right).
\end{align}
By equation \eqref{eq:rsav2} and Taylor expansion,
\begin{align*}
    \tilde{r}^N-r^{N-1}&=\nabla g\left(\bx^{N-1}\right) \cdot\left(\bx^N-\bx^{N-1}\right)\\
    &=g\left(\bx^N\right)-g\left(\bx^{N-1}\right)-\frac{1}{2}\left(\bx^N-\bx^{N-1}\right)^T \nabla^2 g(\xi_N)\left(\bx^N-\bx^{N-1}\right).
\end{align*}
Hence, with the notation $\norm{a}^2_{H} = a^THa$, we find from the above that
\begin{equation}
\begin{aligned}
       \tilde{r}^N-g\left(\bx^N\right)&=r^{N-1}-g\left(\bx^{N-1}\right)-\frac{1}{2}\norm{\bx^N-\bx^{N-1}}^2_{\nabla^2 g(\xi_N)}\\
       &=\eta_0\left(\tilde{r}^{N-1}-g\left(\bx^{N-1}\right)\right)-\frac{1}{2}\norm{\bx^N-\bx^{N-1}}^2_{\nabla^2 g(\xi_N)}\\
       &=\eta_0\left(r^{N-2}-g\left(\bx^{N-2}\right)-\frac{1}{2}\norm{\bx^{N-1}-\bx^{N-2}}^2_{\nabla^2 g(\xi_{N-1})}\right)\\&\quad -\frac{1}{2}\norm{\bx^N-\bx^{N-1}}^2_{\nabla^2 g(\xi_N)}\notag \\
       &=\cdots\\
       &=\eta_0\left(r^{0}-g\left(\bx^{0}\right)\right)-\frac{1}{2}\sum_{k=1}^{N}\eta_0^k\norm{\bx^{k}-\bx^{k-1}}^2_{\nabla^2 g(\xi_k)}.
\end{aligned}
\end{equation}
Since $r^{0}=g\left(\bx^{0}\right)$ and $a^T(\nabla^2 g) a \leq L_g\norm{a}^2$, we have 
\begin{equation}
    \begin{aligned}
    |\tilde{r}^N-g\left(\bx^N\right)| &= \frac{1}{2}\sum_{k=1}^{N}\eta_0^k\norm{\bx^{k}-\bx^{k-1}}^2_{\nabla^2 g(\xi_k)}\\
    &\leq \frac{L_g}{2}\sum_{k=1}^{N}\norm{\bx^{k}-\bx^{k-1}}^2.
    \end{aligned}
\end{equation}
Noting that $\norm{\bx^{n+1}-\bx^{n}}^2 \leq \frac{\Delta t}{1-\psi}\left( \left(r^n\right)^2 - \left({r}^{n+1}\right)^2 \right) $ for all $n$, we have
\begin{equation}
    \begin{aligned}
    |\tilde{r}^N-g\left(\bx^N\right)|
    &\leq \frac{L_g\Delta t}{2(1-\psi)}\sum_{k=1}^{N}\left( \left(r^{k-1}\right)^2 - \left({r}^{k}\right)^2 \right)\\
    &=\frac{L_g\Delta t}{2(1-\psi)}\left( \left(r^{0}\right)^2 - \left({r}^{N}\right)^2 \right).
    \end{aligned}
\end{equation}
Hence, we derive from the above and \eqref{eq:distance} that
$$|r^{N} - g(\bx^N)| \leq \eta_0 \frac{L_g\Delta t}{2(1-\psi)}\left( \left(r^{0}\right)^2 - \left({r}^{N}\right)^2 \right).$$ 
Let $C_1=\frac 12g(\bx^*)/\left(\eta_0 \frac{L_g}{2(1-\psi)} \left(r^{0}\right)^2 + (1-\eta_0)\frac{L_g}{2\left(1-\psi\right)} \left(r^N\right)^2 \right)$, we find from the above and \eqref{eq:rstar} that for 
$\Delta t \le C_1 $, we have 
\begin{equation}
    r^*\ge  g(\bx^*) + r^{N} - g(\bx^N) - \frac{\Delta t L_g}{2\left(1-\psi\right)} \left(r^N\right)^2\ge \frac 12g(\bx^*)\ge \frac{\sqrt{\delta}}{2} > 0.
\end{equation}
Finally, let $n\rightarrow \infty$ in \eqref{RSAV1}, we find $\nabla f(\bx^*)=0$. The proof is complete.
\end{proof}

\subsection{Positive lower bound of $r_i^n$ for  the E-RSAV scheme}
The study conducted in \cite{liu2020aegd} demonstrates that, when $\mathcal{L}\equiv 0$ and $f$ is $L$-smooth, the values of $r_i^n$ originating from the E-SAV scheme \eqref{ESAV1}-\eqref{ESAV2} exhibit a positive lower bound.
Assuming $(\mathcal{L}^n\bx)_i=\lambda_i^nx_i$ with $\lambda_i^n\ge 0$ for all $i,n$, we  show below that    $\{r_i^n\}_{i=1,\cdots,m}$ of the  E-RSAV scheme \eqref{elealg1}-\eqref{elealg3} are bounded from below by a positive constant.
  
 We first rewrite \eqref{elealg1}-\eqref{elealg3}  as follows:
\begin{align}
&\frac{x_i^{n+1}-x_i^{n}}{\Delta t} +(\mathcal{L}^{n}(\bx^{n+1}-\bx^{n}))_i =-{2\tilde{r}_i^{n+1}} \partial_i g(\bx^n), \label{eleg1}
\\
&{\tilde{r}_i^{n+1}-r_i^{n}} =  \partial_i g(\bx^n) \left(x_i^{n+1}-x_i^{n}\right), \label{eleg2}\\
&r_i^{n+1} = \eta_i \tilde{r}_i^{n+1} + (1-\eta_i)g(\bx^{n+1}). \label{eleg3}
\end{align}
%The by using essentially the same argument as in the proof of Theorem \ref{thm:3.2}, we can prove the following:
\begin{theorem}\label{thm: r-ersav}
    Suppose $f$ is $L$-smooth, and  $(\mathcal{L}^n\bx)_i=\lambda_i^nx_i$ with $\lambda_i^n\ge 0$ for all $i,n$. Let  $r_i^n$ be generated by the scheme \eqref{elealg1}-\eqref{elealg3}.  Then we have  $\lim_{n\rightarrow \infty} \bx^n=\bx^*$. And there exists $C_2>0$ such that for $\Delta t\le C_2$, we have 
    $$ \lim_{n \rightarrow \infty} r_i^n=r_i^*  \geq  \frac{\sqrt{\delta}}2 \; \text{ for }  1 \leq i \leq m,  \;\text{ and }  \nabla f(\bx^*)=0.$$
\end{theorem}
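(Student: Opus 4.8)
The plan is to follow the proof of Theorem~\ref{thm:3.2} line by line, substituting the global energy law by its element-wise version \eqref{esav:e-energy} from Theorem~\ref{thm:esav true}. First I would note that \eqref{esav:e-energy} makes each $(r_i^n)^2$ a non-increasing, non-negative sequence, hence $(r_i^n)^2\to (r_i^*)^2\ge 0$; summing \eqref{esav:e-energy} over $n$ gives $\tfrac{1-\psi}{\Delta t}\sum_n (x_i^{n+1}-x_i^n)^2\le (r_i^0)^2$, and summing over $i$ produces $\sum_n\|\bx^{n+1}-\bx^n\|^2<\infty$, whence $\bx^n\to\bx^*$ exactly as in Theorem~\ref{thm:3.2}. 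Solving \eqref{eleg1}--\eqref{eleg2} with $(\mathcal{L}^n\bx)_i=\lambda_i^n x_i$ gives the explicit formula $\tilde r_i^{n+1}=r_i^n\bigl(1+\tfrac{2\Delta t(\partial_i g(\bx^n))^2}{1+\Delta t\lambda_i^n}\bigr)^{-1}$, so $\tilde r_i^{n+1}\ge 0$ whenever $r_i^n\ge 0$; since $r_i^{n+1}$ in \eqref{eleg3} is a convex combination of $\tilde r_i^{n+1}\ge 0$ and $g(\bx^{n+1})\ge\sqrt\delta>0$, the sequence $r_i^n$ stays non-negative for every $i$.

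The crux is to rule out $r_i^*=0$. As in the scalar case I would assume $r_i^n\le\sqrt\delta$ for all large $n$ (otherwise $r_i^*\ge\sqrt\delta$ and there is nothing to prove), so that $0\le\tilde r_i^n\le r_i^n\le g(\bx^n)$. I would then aim to control the per-coordinate drift $|r_i^n-g(\bx^n)|$, because $|r_i^n-g(\bx^n)|\le\tfrac12 g(\bx^n)$ would at once give $r_i^n\ge\tfrac12 g(\bx^n)\ge\tfrac{\sqrt\delta}{2}$ and hence $r_i^*\ge\tfrac{\sqrt\delta}{2}$. Following Theorem~\ref{thm:3.2}, I would unwind $r_i^n-g(\bx^n)=\eta_i\bigl(\tilde r_i^n-g(\bx^n)\bigr)$ using \eqref{eleg2} together with a Taylor expansion of $g$ about $\bx^{n-1}$, telescoping all the way down to $r_i^0=g(\bx^0)$, and then choose $\Delta t\le C_2$ small enough to make the resulting remainder at most $\tfrac12 g(\bx^*)$.

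The main obstacle is exactly the step where the element-wise scheme parts company with the scalar one. In Theorem~\ref{thm:3.2} the identity \eqref{eq:rsav2} supplies the \emph{entire} first-order increment $\nabla g(\bx^n)^T(\bx^{n+1}-\bx^n)=\tilde r^{n+1}-r^n$, so the Taylor telescoping leaves only the second-order remainders $\tfrac12\|\bx^{k}-\bx^{k-1}\|^2_{\nabla^2 g}$, whose sum is controlled by $\sum_k\|\bx^{k}-\bx^{k-1}\|^2=O(\Delta t)$. In the element-wise setting \eqref{eleg2} records only the $i$-th partial increment $\partial_i g(\bx^n)(x_i^{n+1}-x_i^n)$, so expanding $g(\bx^{n+1})-g(\bx^n)$ leaves the \emph{first-order} cross terms $\sum_{j\ne i}\partial_j g(\bx^{n})(x_j^{n+1}-x_j^n)$; rewriting each via \eqref{eleg2} as $r_j^{n}-\tilde r_j^{n+1}$, their crude sum is only $O(1)$, not $O(\Delta t)$, so one cannot simply bound the relaxation products by one as in the scalar proof. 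Taming these coupling terms --- exploiting the decay of the products of relaxation factors $\eta_j$ and the relaxation constraint \eqref{elealg5}, together with the boundedness of $\nabla g$ along the convergent (hence bounded) orbit --- is the delicate part that forces the separate threshold $C_2$ and is where I expect the real work to lie. Once $r_i^*\ge\sqrt\delta/2$ is in hand, passing to the limit in \eqref{eleg1} yields $2r_i^*\partial_i g(\bx^*)=0$ for each $i$, and positivity of $r_i^*$ forces $\partial_i g(\bx^*)=0$, i.e.\ $\nabla f(\bx^*)=0$.
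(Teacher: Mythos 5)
Your overall architecture coincides with the paper's: the element-wise energy law gives monotone nonnegative $(r_i^n)^2$ and summable increments, hence $\bx^n\to\bx^*$; the explicit formula for $\tilde r_i^{n+1}$ together with the convex-combination step \eqref{eleg3} gives nonnegativity; the reduction to indices whose $r_i^n$ eventually drops below $\sqrt\delta$ (the paper formalizes this as a split $M=I_1\cup I_2$ with a uniform $N=\max_{i\in I_2}N_i$); Taylor telescoping of $g$ back to $r_i^0=g(\bx^0)$; and a smallness threshold on $\Delta t$. You also correctly locate the only place where the element-wise scheme departs from the scalar proof of Theorem~\ref{thm:3.2}: the first-order cross terms $\sum_{j\ne i}\partial_j g(\bx^n)(x_j^{n+1}-x_j^n)=\sum_{j\ne i}\bigl(\tilde r_j^{n+1}-r_j^n\bigr)$.

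But at exactly that point your proposal has a genuine gap, and the tools you gesture at would not close it. The missing idea is a sign observation, not a decay estimate: by \eqref{explit_rin} (equivalently, combining \eqref{eleg1} with \eqref{eleg2}), $\tilde r_j^{n+1}-r_j^n=-\frac{2\Delta t\,\tilde r_j^{n+1}}{1+\Delta t\lambda_j^n}(\partial_j g(\bx^n))^2\le 0$ once $\tilde r_j^{n+1}\ge 0$. Hence in the upper-bound telescoping for $g(\bx^N)-\tilde r_j^N$ these cross terms never require absolute values: they may be dropped outright, or, as the paper does, bounded by $\Delta t\,\tilde r_i^k(\partial_i g(\bx^{k-1}))^2$ and swept into a finite, trajectory-dependent constant $C_N$ attached to the \emph{fixed} index $N$, after which one sets $C_2=\frac 12 g(\bx^*)/\bigl(C_N+\frac{L_g}{2(1-\psi)}\sum_{i=1}^m (r_i^N)^2\bigr)$. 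In particular, no decay of products of relaxation factors is needed or available --- $\eta_{j0}$ may equal $1$, so your proposed use of ``decay of the products of $\eta_j$'' has no guarantee behind it, and boundedness of $\nabla g$ along the orbit yields only the useless $O(1)$ bound you yourself computed. A second, smaller structural difference: rather than controlling $|r_i^n-g(\bx^n)|$ separately for every coordinate, the paper handles all coordinates at once by passing to $j=\operatorname{argmin}_{i\in I_2} r_i^*$ and using $\sum_{i\in I_2}(r_i^*-r_i^N)\le r_j^*-r_j^N$ (the discarded summands are $\le 0$), so that a single telescoped estimate for $g(\bx^N)-r_j^N$ suffices. With the sign observation inserted at your flagged step, your outline becomes essentially the paper's proof.
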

\begin{proof}
 With the assumption on $\mathcal{L}^n$, the scheme \eqref{elealg1}-\eqref{elealg3} is decoupled for each $i$, and similarly to \eqref{explit_rn}, we can derive 
 \begin{equation}\label{explit_rin}
    \tilde r_i^{n+1}=\frac{r_i^n}{1+2\Delta t (1+\Delta t\lambda_i^n)^{-1}\nabla g(\bx^n)^T\nabla g(\bx^n)},\;i=1,\cdots,m.
\end{equation}
Hence, along with \eqref{energyi3}, we derive that
 $r_i^n\ge 0$ is a decreasing sequence so that $\lim_{n \rightarrow \infty} r_i^n=r_i^*$. We only need to show that  $r_i^*>0$ for  $1 \leq i \leq m$. 
   
We  first split $M:= \{1,2, \cdots, m\}$ into $I_1$ and $I_2$, where 
\begin{equation}
    I_1 = \{i \in M : r_i^n \geq \sqrt{\delta},\; \forall n \}, \quad I_2 = M\setminus I_1.
\end{equation}
Note that $\{r_i^n\}$ are decreasing sequences for $\forall i$. Then for any $i \in I_1$, we can conclude that $r_i^* \geq \sqrt{\delta}$. So we only need to show that for any $i \in I_2$,  $r_i^*  > 0$. We can characterize $I_2$  as 
\begin{equation}
I_2 = \{i \in M : \exists N_i, \text{ such that } r_i^n < \sqrt{\delta}, \; \forall n \geq N_i\}.
\end{equation}
For any $i \in I_2$, we have $r_i^n < \sqrt{\delta} \leq g(x^{n})$ for any $n \geq N$, where $N = \max_{i\in I_2} N_i$. 
We observe from \eqref{elealg3} that $r_i^n$ is a convex combination of $\tilde{r}_i^{n}$ and $g(x^{n})$,  so we have
\begin{equation} \label{ieq: energy}
    \tilde{r}_i^{n} \leq r_i^n \leq g(\bx^{n}).
\end{equation}
From equation \eqref{energyi3}, we obtain
\begin{equation} \label{bound of x}
\left({x_i^{n+1}-x_i^{n}}\right)^2 \leq \frac{\Delta t}{(1-\psi)}\left(({r}_i^{n})^2 - (r_i^{n+1})^2 \right), \,i=1,2,\cdots,m.
\end{equation}
Taking the sum of the aforementioned for $n=0,1,2,\cdots$, we find
\begin{equation} 
\sum_{n=0}^\infty \left({x_i^{n+1}-x_i^{n}}\right)^2 \leq \frac{\Delta t}{(1-\psi)}\left(({r}_i^{0})^2 - (r_i^*)^2 \right), \,i=1,2,\cdots,m,
\end{equation}
which implies that $\lim_{n\rightarrow \infty} x_i^n=x_i^*, \,i=1,2,\cdots,m.$
Since $\tilde{r}_i^{n+1} \leq r_i^{n}$ for $\forall n, i$ and  $\tilde{r}_i^{n} \leq r_i^{n}$ for $n \geq N$ and $i \in I_2$, we have that for $n \geq N$, 
\begin{align}\label{eq:g talor}
    g(\bx^{n+1}) &\leq g(\bx^n) + \nabla g(\bx^n)\left(\bx^{n+1} - \bx^n \right) + \frac{L_g}{2}\norm{x^{n+1}-x^n}^2\\
    &= g(\bx^n) + \sum_{i = 1}^m \partial_i g(\bx^n)\left(x_i^{n+1} - x_i^n \right) + \frac{L_g}{2} \sum_{i = 1}^m \left({x_i^{n+1}-x_i^{n}}\right)^2\nonumber\\
    &\leq g(\bx^n) + \sum_{i = 1}^m \tilde{r}_i^{n+1}-r_i^{n} + \frac{\Delta t L_g}{2\left(1-\psi\right)}\sum_{i = 1}^m \left( \left(r_i^n\right)^2 - \left({r}_i^{n+1}\right)^2 \right)\nonumber \\
    &\leq g(\bx^n) + \sum_{i \in I_2} \tilde{r}_i^{n+1}-r_i^{n} + \frac{\Delta t L_g}{2\left(1-\psi\right)}\sum_{i = 1}^m \left( \left(r_i^n\right)^2 - \left({r}_i^{n+1}\right)^2 \right) \label{iteration of g}\nonumber\\
    &\leq g(\bx^n) + \sum_{i \in I_2} {r}_i^{n+1}-r_i^{n} + \frac{\Delta t L_g}{2\left(1-\psi\right)}\sum_{i = 1}^m \left( \left(r_i^n\right)^2 - \left({r}_i^{n+1}\right)^2 \right).\nonumber
\end{align}
Summing up the above from $n = N$ to $K$, we obtain
\begin{equation}
      g(\bx^{K+1}) \leq g(\bx^N) + \sum_{i \in I_2} {r}_i^{K+1}-r_i^{N} + \frac{\Delta t L_g}{2\left(1-\psi\right)}\sum_{i = 1}^m \left( \left(r_i^N\right)^2 - \left({r}_i^{K+1}\right)^2 \right).
\end{equation}
Let $K$ go to $+\infty$, and we know $g(\bx^*)\leq g(\bx^{K+1})$, ${r}_i^{*} \leq r_i^N$, then we have
\begin{equation*}
\begin{aligned}
     g(\bx^*) &\leq g(\bx^N) + \sum_{i \in I_2} {r}_i^{*}-r_i^{N} + \frac{\Delta t L_g}{2\left(1-\psi\right)}\sum_{i = 1}^m \left( \left(r_i^N\right)^2 - \left({r}_i^{*}\right)^2 \right)\\
     &\leq  g(\bx^N) + (\min_i {r}_i^{*} + \sum_{i \in I_2\setminus j  } r_i^{N} - \sum_{i \in I_2  } r_i^{N} ) + \frac{\Delta t L_g}{2\left(1-\psi\right)}\sum_{i = 1}^m \left( \left(r_i^N\right)^2 - \left({r}_i^{*}\right)^2 \right)\\
     &\leq  g(\bx^N) + (\min_i {r}_i^{*} - r_j^N ) + \frac{\Delta t L_g}{2\left(1-\psi\right)}\sum_{i = 1}^m \left(r_i^N\right)^2.
\end{aligned}
\end{equation*}
where $j = \text{argmin}_{i\in I_2} {r}_i^{*}$. Hence, 
\begin{equation}
    r^* := r_j^{*} = \min_i {r}_i^{*} \geq g(\bx^*) - \left(  g(\bx^N) - r_j^N\right) - \frac{\Delta t L_g}{2\left(1-\psi\right)}\sum_{i = 1}^m \left(r_i^N\right)^2.
\end{equation}
Next, we bound the distance between $r_j^N$ and $g(\bx^N)$. First, for any $n$ and $1\leq i \leq m$, we have
    \begin{align}
        & r_i^n=\eta_i \tilde{r}_i^n+(1-\eta_i) \cdot g\left(\bx^n\right),\nonumber\\
&r_i^n-g\left(\bx^n\right)=\eta_i \tilde{r}_i^n-\eta_i g\left(\bx^n\right)=\eta_i\left(\tilde{r}_i^n-g\left(\bx^n\right)\right).\nonumber
    \end{align}
Noticing that the third equation of \eqref{eq:g talor} works for $\forall n$, we have
\begin{equation}
     g(\bx^{n+1}) \leq g(\bx^n) + \sum_{i = 1}^m \tilde{r}_i^{n+1}-r_i^{n} + \frac{\Delta t L_g}{2\left(1-\psi\right)}\sum_{i = 1}^m \left( \left(r_i^n\right)^2 - \left({r}_i^{n+1}\right)^2 \right)
\end{equation}
and
\begin{equation*}
    \begin{aligned}
        & g\left(\bx^N\right)-r_j^N\\ &= \eta_j \left(g\left(\bx^N\right)-\tilde{r}_j^N\right)\\
        &\leq \eta_j (g\left(x^{N-1}\right)-{r}_j^{N-1} + \sum_{i \in M\setminus j}\tilde{r}_i^{N}-r_i^{N-1} + \frac{\Delta t L_g}{2\left(1-\psi\right)}\sum_{i = 1}^m  \left(r_i^{N-1}\right)^2 - \left({r}_i^{N}\right)^2  )\\
        &=\eta_j (\eta_j \left(g\left(\bx^{N-1}\right)-\tilde{r}_j^{N-1}\right) + \sum_{i \in M\setminus j}\tilde{r}_i^{N}-r_i^{N-1} + \frac{\Delta t L_g}{2\left(1-\psi\right)}\sum_{i = 1}^m \left(r_i^{N-1}\right)^2 - \left({r}_i^{N}\right)^2 )\\
        &\leq \cdots\\
        &\leq \eta_j^N (g\left(\bx^{0}\right)-{r}_j^{0} ) + \sum_{k = 1}^{N} \eta_j^k (\sum_{i \in M\setminus j}\tilde{r}_i^{k}-r_i^{k-1}  + \frac{\Delta t L_g}{2\left(1-\psi\right)}\sum_{i = 1}^m \left(r_i^{k-1}\right)^2 - \left({r}_i^{k}\right)^2 )\\
        &\leq \sum_{k = 1}^{N} \eta_j^k (\sum_{i \in M\setminus j}\Delta t \tilde{r}_i^k(\partial_i g(\bx^{k-1}))^2  + \frac{\Delta t L_g}{2\left(1-\psi\right)}\sum_{i = 1}^m \left(r_i^{k-1}\right)^2 - \left({r}_i^{k}\right)^2 )\\
        &=\Delta t \sum_{k = 1}^{N} \eta_j^k (\sum_{i \in M\setminus j} \tilde{r}_i^k(\partial_i g(\bx^{k-1}))^2  + \frac{ L_g}{2\left(1-\psi\right)}\sum_{i = 1}^m \left(r_i^{k-1}\right)^2 - \left({r}_i^{k}\right)^2 )\\
        &=: \Delta t C_N.
    \end{aligned}
\end{equation*}
We can obtain $g\left(\bx^N\right)-r_j^N \geq 0$ from equation \eqref{ieq: energy} and thus $C_N \geq 0$.
Hence, 
\begin{equation}
r^{*} = \min_i {r}_i^{*} \geq g(\bx^*) - \Delta t (C_N +  \frac{L_g}{2\left(1-\psi\right)}\sum_{i = 1}^m \left(r_i^N\right)^2 ).
\end{equation}
Let $C_2=\frac 12{g(\bx^*)}/({C_N +  \frac{L_g}{2\left(1-\psi\right)}\sum_{i = 1}^m (r_i^N)^2})$. Then, for all 
 $\Delta t \leq C_2$, we have 
\begin{equation}
      r^{*} = \min_i {r}_i^{*} \geq \frac 12g(\bx^*) \ge \frac{\sqrt{\delta}}2 > 0.
\end{equation}
Finally, letting $n\rightarrow \infty$ in \eqref{elealg1}, we derive
$\partial_i f(\bx^*)=0$ for $i=1,\cdots,m$ which implies $\nabla f(\bx^*)=0$. The proof is complete.
%where $C^* = {g(\bx^*)}/(\left|{C_N +  \frac{\Delta t L_g}{2\left(1-\psi\right)}\sum_{i = 1}^m (r_i^N)^2}\right|)$. 
\end{proof}

\subsection{Dissipation of the original energy}
We showed in Section 2 the modified energy of the E-RSAV approach remains dissipative. Next, we show that when $\mathcal{L}\equiv 0$,  the original energy of E-RSAV is also dissipative when the step size $\Delta t$ is sufficiently small. 
\begin{theorem}\label{thm: original energy}
     Assuming that $f$ is L-smooth and has a positive lower bound $\delta$, then the solution $\boldsymbol{x}^{n+1}$ of the E-RSAV scheme with $\mathcal{L} = 0$ satisfies the discrete dissipation law $f(\boldsymbol{x}^{n+1}) \leq f(\boldsymbol{x}^{n})$ with $\Delta t \leq \min(C_2, \frac{\delta^{\frac{3}{2}}}{Lf(\boldsymbol{x}^0)})$ and $\lim_{n\rightarrow \infty} f(\bx^n)=f^*$.
\end{theorem}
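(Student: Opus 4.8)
The plan is to prove monotonicity through the standard descent lemma, but applied to $g=\sqrt{f}$ rather than to $f$, since the E-RSAV update is most naturally expressed in terms of $\nabla g$; because $g\ge 0$, showing $g(\bx^{n+1})\le g(\bx^n)$ is equivalent to the claimed $f(\bx^{n+1})\le f(\bx^n)$. With $\mathcal{L}\equiv 0$, the update \eqref{eleg1} becomes $x_i^{n+1}-x_i^n=-2\Delta t\,\tilde r_i^{n+1}\,\partial_i g(\bx^n)$ for each $i$, and from the proof of Theorem~\ref{thm:3.2} I may use that $g$ is $L_g$-smooth with $L_g=\frac{L}{2\delta}$. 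Since $\Delta t\le C_2$, Theorem~\ref{thm: r-ersav} already supplies $\bx^n\to\bx^*$, $\nabla f(\bx^*)=0$, and the positive lower bounds on $r_i^n$; the only new ingredient is the monotone decrease of the original energy.

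First I would fix the uniform bound that makes the quadratic remainder controllable. The modified-energy dissipation \eqref{esav:e-energy} from Theorem~\ref{thm:esav true} shows each $\{r_i^n\}$ is non-increasing, so $r_i^n\le r_i^0=\sqrt{f(\bx^0)}$, and then \eqref{explit_rin}, whose denominator is at least $1$, gives $0\le\tilde r_i^{n+1}\le r_i^n\le\sqrt{f(\bx^0)}$ for every $i$ and $n$. This estimate is $n$-independent, which is exactly what is needed below.

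Next I apply the descent lemma to $g$:
\begin{equation*}
g(\bx^{n+1})\le g(\bx^n)+\nabla g(\bx^n)^T(\bx^{n+1}-\bx^n)+\tfrac{L_g}{2}\|\bx^{n+1}-\bx^n\|^2.
\end{equation*}
Substituting $x_i^{n+1}-x_i^n=-2\Delta t\,\tilde r_i^{n+1}\,\partial_i g(\bx^n)$ and grouping coordinatewise gives
\begin{equation*}
g(\bx^{n+1})-g(\bx^n)\le -2\Delta t\sum_{i=1}^m \tilde r_i^{n+1}\bigl(\partial_i g(\bx^n)\bigr)^2\Bigl(1-L_g\Delta t\,\tilde r_i^{n+1}\Bigr).
\end{equation*}
Each summand is non-negative once $L_g\Delta t\,\tilde r_i^{n+1}\le 1$; using $L_g=\frac{L}{2\delta}$ and $\tilde r_i^{n+1}\le\sqrt{f(\bx^0)}$, this holds whenever $\Delta t\le \frac{2\delta}{L\sqrt{f(\bx^0)}}$. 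Since $f(\bx^0)\ge\delta$ yields $\frac{\delta^{3/2}}{Lf(\bx^0)}\le\frac{2\delta}{L\sqrt{f(\bx^0)}}$, the prescribed threshold $\Delta t\le\min\!\bigl(C_2,\frac{\delta^{3/2}}{Lf(\bx^0)}\bigr)$ forces every factor to be non-negative, so the right-hand side is $\le 0$ and $g(\bx^{n+1})\le g(\bx^n)$, i.e.\ $f(\bx^{n+1})\le f(\bx^n)$.

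Finally, $\{f(\bx^n)\}$ is non-increasing and bounded below by $\delta>0$, so the monotone convergence theorem gives $f(\bx^n)\to f^*$, completing the proof. The main obstacle is the uniform-in-$n$ control of $\tilde r_i^{n+1}$: without the modified-energy dissipation \eqref{esav:e-energy}, the coefficient of the quadratic remainder in the descent lemma could not be bounded independently of $n$, and no single step-size restriction would guarantee descent for all iterations simultaneously. Everything else—the Taylor estimate, the coordinatewise factoring, and the comparison of the two step-size constants via $f(\bx^0)\ge\delta$—is routine.
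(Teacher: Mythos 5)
Your proof is correct, but it reaches the dissipation law by a genuinely different mechanism than the paper. The paper expands $f(\bx^{n+1})$ directly around $\bx^n$, views $\xi_i^n=\tilde r_i^{n+1}/\sqrt{f(\bx^n)}$ as element-wise learning rates, and bounds the first-order term below by $\xi^n_{min}\|\nabla f(\bx^n)\|^2$ and the quadratic remainder by $(\xi^n_{max})^2$; descent then requires $\Delta t\le 2\xi^n_{min}/(L(\xi^n_{max})^2)$, and to bound this threshold away from zero uniformly in $n$ the paper must invoke the positive lower bound $r^*\ge\sqrt\delta/2$ of Theorem \ref{thm: r-ersav} --- which is precisely why $C_2$ enters its step-size restriction. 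You instead apply the descent lemma to $g=\sqrt f$ and keep the sum coordinatewise, so each summand $-2\Delta t\,\tilde r_i^{n+1}\bigl(\partial_i g(\bx^n)\bigr)^2\bigl(1-L_g\Delta t\,\tilde r_i^{n+1}\bigr)$ is individually nonpositive once $0\le\tilde r_i^{n+1}\le\sqrt{f(\bx^0)}$ (which follows from \eqref{esav:e-energy} and \eqref{explit_rin}, as you say) and $\Delta t\le 2\delta/(L\sqrt{f(\bx^0)})$; the min/max mismatch disappears, and only the \emph{upper} bound on $\tilde r_i^{n+1}$ is needed, not the lower bound. This buys two things: monotonicity holds under the weaker threshold $2\delta/(L\sqrt{f(\bx^0)})\ge\delta^{3/2}/(Lf(\bx^0))$ (your comparison via $f(\bx^0)\ge\delta$ is right), and the dissipation law is decoupled from the $r^*$-machinery, with $\Delta t\le C_2$ retained only because the statement includes it and you use Theorem \ref{thm: r-ersav} for $\bx^n\to\bx^*$. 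What the paper's route buys in exchange is the explicit inequality \eqref{function value decreases}, stated in terms of $\|\nabla f(\bx^n)\|^2$, which is exactly the form consumed by the subsequent Polyak--Lojasiewicz linear-convergence theorem; your bound, phrased in $\sum_i\tilde r_i^{n+1}(\partial_i g(\bx^n))^2$, would have to be converted back to gradient form --- and there a positive lower bound on $\tilde r_i^{n+1}$ would resurface --- before it could feed that later argument.
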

\begin{proof}
     Denoting $\xi_i^{n} = \frac{r_i^{n+1}}{\sqrt{f(\boldsymbol{x}^n)}} = \frac{1}{\sqrt{f(x^n)}}{\left(1+\Delta t \frac{({\partial_i f(\boldsymbol{x}^n)})^2}{2 {f(\boldsymbol{x}^n)}}\right)^{-1}r_i^n}$ and \\$\boldsymbol{\xi}^n = (\xi_1^{n}, \xi_2^{n}, \cdots, \xi_m^{n})'$, we have 
     \begin{align}
      f(\boldsymbol{x}^{n+1}) &= f(\boldsymbol{x}^n - \Delta t \boldsymbol{\xi}^n \odot \nabla f(\boldsymbol{x}^n))\\
      & = f(\boldsymbol{x}^n) - \Delta t \left(\boldsymbol{\xi}^n \odot \nabla f(\boldsymbol{x}^n)\right)^{T} \nabla f(\boldsymbol{x}^{n}) \nonumber
      \\ &+ \frac{1}{2}\left(\Delta t \boldsymbol{\xi}^n \odot \nabla f(\boldsymbol{x}^n)\right)^T\nabla^2f(z)\left(\Delta t \boldsymbol{\xi}^n \odot \nabla f(\boldsymbol{x}^n)\right). \notag
     \end{align}
     Noticing that $\xi^n_{min}\norm{\nabla f(x^n)}^2 \leq \left(\boldsymbol{\xi}^n \odot \nabla f(\boldsymbol{x}^n)\right)^{T} \nabla f(x^n) \xi^n_{max}\norm{\nabla f(x^n)}^2$ where \\$\xi^n_{min} = \min(\boldsymbol{\xi}^n)$ and $\xi^n_{max} = \max(\boldsymbol{\xi}^n)$, we have
     \begin{equation} \label{function value decreases}
         f(\boldsymbol{x}^{n+1}) \leq f(\boldsymbol{x}^n) - \Delta t \xi^n_{min} \norm{\nabla f(\boldsymbol{x}^n)}^2 + \frac{L}{2}(\Delta t \xi^n_{max})^2 \norm{\nabla f(\boldsymbol{x}^n)}^2.
     \end{equation}
     % With our assumption, $\xi^n_{min} > 0$, 
     If $\Delta t \leq \frac{2 \xi^n_{min}}{L(\xi^n_{max})^2}$, then $f(\boldsymbol{x}^{n+1}) \leq f(\boldsymbol{x}^n)$. Furthermore, utilizing the positive lower bound $r^{*}$ in Theorem \ref{thm: r-ersav} and $r_i^0 = \sqrt{f(\boldsymbol{x}^0)}$ for $\forall i$, we can have
     \begin{equation*}
         \frac{2 \xi^n_{min}}{L(\xi^n_{max})^2} = \frac{2 (r_i^{n+1})_{min}}{L(r_i^{n+1})_{max}^2}\sqrt{f(\boldsymbol{x}^n)} \geq \frac{2\delta}{L}\frac{(r_i^{n+1})_{min}}{(r_i^{n+1})_{max}^2}  \geq \frac{2\delta}{L}\frac{(r_i^{n+1})_{min}}{(r_i^{0})_{max}^2} \geq \frac{\delta^{\frac{3}{2}}}{Lf(\boldsymbol{x}^0)}.
     \end{equation*}
Therefore, if $\Delta t \leq \min(C_2, \frac{\delta^{\frac{3}{2}}}{Lf(\boldsymbol{x}^0)})$, we can ensure $f(\bx^{n+1}) \leq f(\bx^{n})$. 
%Hence, letting $n\rightarrow \infty$, we derive $\lim_{n\rightarrow \infty} f(\bx^n)=f^*$.
\end{proof}

\subsection{Convergence analysis of the E-RSAV}
%The attainment of the first-order convergence rate of the E-RSAV requires the following lemma:
We first recall the following lemma:
\begin{lemma}(cf.\cite{attouch2013convergence})\label{ieq:PL}
$f$ satisfies the Polyak-Lojasiewicz inequality if 
there exists a $\mu > 0$ such that
\begin{equation}\label{cond:nu}
        \mu\left(f(x)-f^{\star}\right)\leq \frac{1}{2}\norm{\nabla f(x)}^{2}.
    \end{equation}
\end{lemma}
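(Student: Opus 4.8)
The plan is to take the hypothesis at face value: we are given the existence of a constant $\mu>0$ for which the bound \eqref{cond:nu} holds at every $\bx$, and we must conclude that $f$ satisfies the Polyak--Lojasiewicz (PL) inequality in the sense of \cite{attouch2013convergence}. Since \eqref{cond:nu} is exactly the Polyak form of the PL inequality, the first step is to confirm the correspondence with the formulation in \cite{attouch2013convergence}: I would identify $f^{\star}=\inf_{\bx\in\mathbb{R}^{n}}f(\bx)$ as the optimal value, which is finite because $f$ is bounded below by $\delta>0$, and check that \eqref{cond:nu} is required to hold uniformly in $\bx$ with a single constant $\mu$, matching the reference's convention. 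This reduces the statement to verifying that the displayed bound is the global PL property rather than a weaker pointwise or local condition.

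The second, and genuinely substantive, step is to extract the structural consequence that makes \eqref{cond:nu} usable as a lemma in the subsequent convergence analysis, namely that under the PL inequality every stationary point of $f$ is a global minimizer. I would argue this directly: if $\nabla f(\bx)=0$, then the right-hand side of \eqref{cond:nu} vanishes, so $\mu\,(f(\bx)-f^{\star})\le 0$; combined with $f(\bx)\ge f^{\star}$ this forces $f(\bx)=f^{\star}$. Applying this to the limit point $\bx^{\star}$ produced in Theorems \ref{thm:3.2} and \ref{thm: r-ersav}, where $\nabla f(\bx^{\star})=0$ was established, yields $f(\bx^{\star})=f^{\star}$, which is precisely the link between stationarity and optimality that the linear-convergence argument requires.

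The main obstacle is not computational but one of scope: because the hypothesis of the lemma is literally the inequality \eqref{cond:nu}, the content to be verified is the equivalence of \eqref{cond:nu} with the PL property recalled from \cite{attouch2013convergence}, together with its immediate implications, rather than a derivation of \eqref{cond:nu} from more primitive assumptions. Accordingly, I would keep the argument short: confirm well-posedness of $f^{\star}$, match \eqref{cond:nu} to the reference's definition, and record the critical-point-implies-global-minimum consequence, leaving the quantitative exploitation of $\mu$, which controls the decay of $f(\bx^{n})-f^{\star}$ along the E-RSAV iterates, to the convergence theorem that follows.
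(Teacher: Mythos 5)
Your reading is correct: the paper offers no proof of this ``lemma'' at all---it is a definition of the Polyak--Lojasiewicz property recalled from the cited reference, and you rightly identify that the only content is matching \eqref{cond:nu} to that convention (with $f^{\star}$ finite since $f\ge\delta>0$). Your added observation that \eqref{cond:nu} forces stationary points to be global minimizers is a correct and harmless bonus, but it is not part of the paper's treatment, which simply invokes the inequality directly in the subsequent linear-convergence theorem.
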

%By assuming that the function $f$ satisfies the Polyak-Lojasiewicz inequality, we can further obtain the first-order convergence rate.
\begin{theorem}
     Let a sequence $\{\boldsymbol{x}^{n}\}$ be generated by the E-RSAV with $\mathcal{L} \equiv 0$. Suppose $f$ satisfying the Polyak-Lojasiewicz inequality and being bounded from below by positive constant $\delta$, then for any $\gamma < \frac{\delta^2}{{8L(f(\boldsymbol{x}^0))^2}} $, there exists  $\Delta t_1,\Delta t_2>0$ such that if $\Delta t_1 \leq \Delta t \leq \min( C_2, \frac{\delta^{\frac{3}{2}}}{Lf(\boldsymbol{x}^0)}, \Delta t_2)$, we have
     \begin{equation}
    f(\boldsymbol{x}^{n+1}) - f^{\star}
    \leq \left(1 -  2\mu\epsilon_n\right)\left(f(\boldsymbol{x}^{n})- f^{\star}\right),
\end{equation}
where $\nu>0$ is the constant in \eqref{cond:nu} and $\gamma \leq \epsilon_n \leq \frac{1}{2L}$.
\end{theorem}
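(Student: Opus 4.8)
The plan is to read off the one-step decrease of the true energy already derived inside the proof of Theorem~\ref{thm: original energy} and then convert it, via the Polyak--Lojasiewicz inequality, into a contraction. Concretely, inequality~\eqref{function value decreases} gives
\begin{equation*}
f(\boldsymbol{x}^{n+1}) \leq f(\boldsymbol{x}^n) - \Bigl(\Delta t\,\xi^n_{min} - \tfrac{L}{2}(\Delta t\,\xi^n_{max})^2\Bigr)\norm{\nabla f(\boldsymbol{x}^n)}^2 ,
\end{equation*}
so I would define $\epsilon_n := \Delta t\,\xi^n_{min} - \tfrac{L}{2}(\Delta t\,\xi^n_{max})^2$ and, assuming momentarily that $\epsilon_n\ge 0$, apply \eqref{cond:nu} in the form $\norm{\nabla f(\boldsymbol{x}^n)}^2\ge 2\mu\,(f(\boldsymbol{x}^n)-f^{\star})$. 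Substituting and subtracting $f^{\star}$ from both sides produces exactly the claimed estimate $f(\boldsymbol{x}^{n+1})-f^{\star}\le(1-2\mu\epsilon_n)(f(\boldsymbol{x}^n)-f^{\star})$. Hence the whole theorem reduces to establishing the two-sided bound $\gamma\le\epsilon_n\le\frac1{2L}$, uniformly in $n$, for a suitable range of $\Delta t$.

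The upper bound is essentially free: by completing the square, $a t-\tfrac{L}{2}b t^2\le \frac{a^2}{2Lb}$ for all $t$ and $a,b>0$, so with $a=\xi^n_{min}$ and $b=(\xi^n_{max})^2$ one gets $\epsilon_n\le \frac{(\xi^n_{min})^2}{2L(\xi^n_{max})^2}\le\frac1{2L}$, the last step using $\xi^n_{min}\le\xi^n_{max}$. The substantive work is the lower bound, and for it I would first obtain $n$-independent control of $\xi^n_{min}$ and $\xi^n_{max}$. Recalling $\xi_i^n=\tilde r_i^{n+1}/\sqrt{f(\boldsymbol{x}^n)}$ together with the explicit formula $\tilde r_i^{n+1}=r_i^n\big/\bigl(1+\Delta t\,(\partial_i f(\boldsymbol{x}^n))^2/(2f(\boldsymbol{x}^n))\bigr)$, I would combine: (i) the positive lower bound $r_i^n\ge\sqrt{\delta}/2$ from Theorem~\ref{thm: r-ersav}; (ii) the monotonicity $r_i^n\le r_i^0=\sqrt{f(\boldsymbol{x}^0)}$; (iii) the decrease $f(\boldsymbol{x}^n)\le f(\boldsymbol{x}^0)$ from Theorem~\ref{thm: original energy} and the floor $f\ge\delta$; and (iv) the $L$-smoothness estimate $\norm{\nabla f}^2\le 2Lf$, which yields $(\partial_i f)^2/(2f)\le L$ and hence $1+\Delta t(\partial_i f)^2/(2f)\le 1+\Delta t L$. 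These give the uniform bounds $\xi^n_{min}\ge\frac{\sqrt{\delta}}{2(1+\Delta t L)\sqrt{f(\boldsymbol{x}^0)}}$ and $\xi^n_{max}\le\frac{\sqrt{f(\boldsymbol{x}^0)}}{\sqrt{\delta}}$.

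Feeding these into the definition of $\epsilon_n$ produces a lower bound $\epsilon_n\ge\underline{\epsilon}(\Delta t):=\Delta t\,\frac{\sqrt{\delta}}{2(1+\Delta t L)\sqrt{f(\boldsymbol{x}^0)}}-\tfrac{L}{2}\Delta t^2\,\frac{f(\boldsymbol{x}^0)}{\delta}$ that depends on $\Delta t$ alone. I would then note that $\underline{\epsilon}$ is continuous with $\underline{\epsilon}(0)=0$ and that its maximum over $\Delta t$ exceeds $\gamma$ precisely because of the hypothesis $\gamma<\frac{\delta^2}{8L(f(\boldsymbol{x}^0))^2}$; this threshold is exactly $\frac{(\xi^n_{min})^2}{2L(\xi^n_{max})^2}$ evaluated with the clean bounds (i.e.\ in the regime where $1+\Delta t L$ is close to $1$). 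Consequently $\{\Delta t:\underline{\epsilon}(\Delta t)\ge\gamma\}$ contains a nonempty interval $[\Delta t_1,\Delta t_2]$, and for $\Delta t$ in this interval intersected with the range $\Delta t\le\min(C_2,\delta^{3/2}/(Lf(\boldsymbol{x}^0)))$ required by Theorems~\ref{thm: r-ersav} and~\ref{thm: original energy}, we get $\gamma\le\epsilon_n\le\frac1{2L}$ for every $n$, which also retroactively validates the sign assumption $\epsilon_n\ge 0$.

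The step I expect to be the main obstacle is the last one: genuinely guaranteeing that $[\Delta t_1,\Delta t_2]$ is nonempty. One must reconcile the strict threshold $\gamma<\frac{\delta^2}{8L(f(\boldsymbol{x}^0))^2}$ with the $(1+\Delta t L)$ correction that slightly depresses the achievable maximum of $\underline{\epsilon}$; the resolution is to keep $\Delta t$ small enough that $1+\Delta t L$ stays near $1$, so that the \emph{strict} inequality leaves the needed slack — this is what the upper endpoint $\Delta t_2$ encodes. The lower endpoint $\Delta t_1$ is unavoidable because $\underline{\epsilon}(\Delta t)\to 0$ as $\Delta t\to 0$, which is exactly why the admissible step sizes form a two-sided window $\Delta t_1\le\Delta t\le\Delta t_2$ rather than merely a single upper cap.
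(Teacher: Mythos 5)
Your route is the paper's route almost line for line: you define the same $\epsilon_n = \Delta t\,\xi^n_{min} - \tfrac{L}{2}(\Delta t\,\xi^n_{max})^2$ from \eqref{function value decreases}, apply the PL inequality to get the contraction, obtain the upper bound $\epsilon_n \le \frac{(\xi^n_{min})^2}{2L(\xi^n_{max})^2} \le \frac{1}{2L}$ from the vertex of the quadratic, and lower-bound $\epsilon_n$ by an $n$-independent quadratic in $\Delta t$ whose superlevel set $\{h \ge \gamma\}$ supplies the window $[\Delta t_1,\Delta t_2]$. The paper's version of this quadratic is $h(\Delta t) = \frac{\omega}{2}\Delta t - \frac{L}{2\omega^2}\Delta t^2$ with $\omega = \sqrt{\delta}/\sqrt{f(\boldsymbol{x}^0)}$, obtained from $\xi^n_{min} \ge r^{*}/\sqrt{f(\boldsymbol{x}^0)} \ge \sqrt{\delta}/(2\sqrt{f(\boldsymbol{x}^0)})$ and $\xi^n_{max} \le \sqrt{f(\boldsymbol{x}^0)}/\sqrt{\delta}$; its maximum is \emph{exactly} $\frac{\omega^4}{8L} = \frac{\delta^2}{8L(f(\boldsymbol{x}^0))^2}$, so the strict hypothesis $\gamma < \frac{\delta^2}{8L(f(\boldsymbol{x}^0))^2}$ makes the discriminant $\omega^6 - 8\gamma L\omega^2$ strictly positive and yields the two explicit roots $\Delta t_{1,2} = \frac{\omega^3 \mp \sqrt{\omega^6 - 8\gamma L \omega^2}}{2L}$ with $\Delta t_1 < \Delta t_2$.

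The genuine gap is precisely the step you flagged, and your proposed resolution does not close it. Your lower bound carries the extra factor $(1+\Delta t L)^{-1}$, so your $\underline{\epsilon}(\Delta t) = \frac{\omega}{2}\frac{\Delta t}{1+L\Delta t} - \frac{L}{2\omega^2}\Delta t^2$ satisfies $\underline{\epsilon}(\Delta t) < h(\Delta t)$ for every $\Delta t > 0$, and hence $\sup_{\Delta t}\underline{\epsilon}$ is strictly below $\frac{\delta^2}{8L(f(\boldsymbol{x}^0))^2}$ by a fixed positive margin depending on $L$ and $\omega$. For any $\gamma$ in the nonempty interval $\bigl(\sup_{\Delta t}\underline{\epsilon},\ \frac{\delta^2}{8L(f(\boldsymbol{x}^0))^2}\bigr)$ your set $\{\Delta t : \underline{\epsilon}(\Delta t) \ge \gamma\}$ is empty, so the theorem as stated ("for any $\gamma < \frac{\delta^2}{8L(f(\boldsymbol{x}^0))^2}$") is not recovered. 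The suggestion to "keep $\Delta t$ small enough that $1+\Delta t L$ stays near $1$" cannot rescue this: $\underline{\epsilon}(\Delta t) \to 0$ as $\Delta t \to 0$, and the maximizer of $\underline{\epsilon}$ is bounded away from $0$, where the depression is a fixed amount, not one tunable to zero. To be fair, you are being more careful than the paper here: the update genuinely uses $\tilde{r}_i^{n+1}$, while the paper writes $\xi_i^n = r_i^{n+1}/\sqrt{f(\boldsymbol{x}^n)}$ yet identifies it with the explicit formula for $\tilde{r}_i^{n+1}/\sqrt{f(\boldsymbol{x}^n)}$; it then invokes the monotone lower bound $r_i^{n+1} \ge r^{*} \ge \sqrt{\delta}/2$ for the relaxed variable, which is what delivers the clean $h$ and the exact threshold. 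To reach the stated constant you would need either the paper's identification of $\xi_i^n$ with $r_i^{n+1}/\sqrt{f(\boldsymbol{x}^n)}$, or a lower bound on $\tilde{r}_i^{n+1}$ sharper than $r^{*}/(1+\Delta t L)$; with your bound as is, you prove the theorem only for the strictly smaller threshold $\sup_{\Delta t}\underline{\epsilon}(\Delta t)$.
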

\begin{proof}
    Subtracting $f^{\star}$ from both sides of inequality \eqref{function value decreases} in Theorem \ref{thm: original energy} and using Lemma \ref{ieq:PL}, we have 
    \begin{equation*}
    \begin{aligned}
    f(\boldsymbol{x}^{n+1}) - f^{\star}&\leq f(\boldsymbol{x}^{n})-f^{\star} -( \Delta t \xi^n_{min}  - \frac{L}{2}(\Delta t \xi^n_{max})^2 )\norm{\nabla f(\boldsymbol{x}^n)}^2\\
    &\leq f(\boldsymbol{x}^{n})-f^{\star} - 2\mu( \Delta t \xi^n_{min}  - \frac{L}{2}(\Delta t \xi^n_{max})^2 ) \left(f(\boldsymbol{x}^{n})-f^{\star}\right)\\
    &=:  
    \left(1-2\mu\epsilon_n \right)\left(f(\boldsymbol{x}^{n})-f^{\star}\right).
    \end{aligned}
    \end{equation*}
    where $\epsilon_n = \Delta t \xi^n_{min}  - \frac{L}{2}(\Delta t \xi^n_{max})^2$. To achieve the first-order convergence rate, it is necessary that $\epsilon_n\in (0,1)$. 
    
    An upper bound on $\epsilon_n$ can be obtained directly, as it is a quadratic function of $\Delta t$ that achieves its maximum value at $\Delta t = \frac{\xi^n_{min}}{L( \xi^n_{max})^2}$.
     \begin{equation*}
           \epsilon_n \leq \frac{\xi^n_{min}}{L( \xi^n_{max})^2} \xi^n_{min}  - \frac{L}{2}\left(\frac{\xi^n_{min}}{L( \xi^n_{max})^2}\right)^2(\xi^n_{max})^2 = \frac{(\xi^n_{min})^2}{2L( \xi^n_{max})^2} \leq \frac{1}{2L}.
    \end{equation*}
    To obtain a lower bound, we can rewrite $\epsilon_n$ as follows:
    \begin{equation*}
        \epsilon_n  = \Delta t  \frac{(r_i^{n+1})_{min}}{\sqrt{f(\boldsymbol{x}^n)}}  - \frac{L}{2}(\Delta t  \frac{(r_i^{n+1})_{max}}{\sqrt{f(\boldsymbol{x}^n)}})^2.
    \end{equation*}
    From Theorem \ref{thm: original energy}, we can obtain $\epsilon_n \geq 0$ if $\Delta t \leq \min( C_2, \frac{\delta^{\frac{3}{2}}}{Lf(\boldsymbol{x}^0)})$. Clearly, if we need  $\epsilon_n \geq \gamma > 0$, we need a tighter bound on $\Delta t$. Since $f(\boldsymbol{x}^{n+1}) \leq f(\boldsymbol{x}^{n})$ and $r_i^0 = \sqrt{f(\boldsymbol{x}^0)}$ for $\forall i$, we obtain
    \begin{equation*}
        \epsilon_n \geq \Delta t  \frac{r^{*}}{\sqrt{f(\boldsymbol{x}^0)}}  - \frac{L}{2}(\Delta t  \frac{(r_i^{0})_{max}}{\sqrt{\delta}})^2 \geq \Delta t  \frac{\sqrt{\delta}}{2\sqrt{f(\boldsymbol{x}^0)}}  - \frac{Lf(\boldsymbol{x}^0)}{2{\delta}}\Delta t^2 := h(\Delta t).
    \end{equation*}
    %where $h$ is a quadratic function of $\Delta t$. 
    Denoting $\omega = \frac{\sqrt{\delta}}{\sqrt{f(\boldsymbol{x}^0)}}$, we can rewrite $h$  as
    \begin{equation*}
        h(\Delta t) = -\frac{L}{2\omega^2} \Delta t^2 + \frac{\omega}{2} \Delta t.
    \end{equation*}
    To obtain $h(\Delta t) \geq \gamma > 0$, we need
    \begin{equation*}
        \frac{\omega^3 - \sqrt{\omega^6 - 8\gamma{L}{\omega^2}}}{2L}  \leq \Delta t \leq   \frac{\omega^3 + \sqrt{\omega^6 - 8\gamma{L}{\omega^2}}}{2L},
    \end{equation*}
    and $\omega^6 - 8\gamma{L}{\omega^2} \geq 0$
    which requires $\gamma \leq \frac{\omega^4}{8L} = \frac{\delta^2}{8L(f(\boldsymbol{x}^0))^2}$. Hence,
     setting $\Delta t_1 = \frac{\omega^3 - \sqrt{\omega^6 - 8\gamma{L}{\omega^2}}}{2L}$ and $\Delta t_2 = \frac{\omega^3 + \sqrt{\omega^6 - 8\gamma{L}{\omega^2}}}{2L}$, we can easily show  that if $\Delta t_1 \leq \Delta t \leq \min( C_2, \frac{\delta^{\frac{3}{2}}}{Lf(\boldsymbol{x}^0)}, \Delta t_2)$, then $\gamma \leq \epsilon_n \leq \frac{1}{2L}$.  
\end{proof}
\begin{remark}
A analogous result can be attained for the RSAV scheme(see also \cite{liu2023efficient}).
\end{remark}
\section{Enhanced convergence rates}
We showed in the last section that the E-RSAV algorithm exhibits a linear convergence rate. It is known that the selection of step size is a crucial factor in gradient descent algorithms, as demonstrated by the superlinear convergence rate of the secant method with step size $\Delta t = \frac{x_n-x_{n-1}}{f^{\prime}\left(x_n\right)-f^{\prime}\left(x_{n-1}\right)}$, and the quadratic convergence rate of the Newton method with step size $\frac{1}{f^{\prime \prime}\left(x_n\right)}$ for univariate optimization problems. In this section, we demonstrate that by selecting an appropriate step size, the linear convergence rate of E-SAV and E-RSAV can be enhanced to achieve a superlinear convergence rate in the univariate case and present an adaptive version of E-RSAV which accelerates the convergence rate of E-RSAV in the multivariate case.

\subsection{Super Linear Convergence rate of univariate E-SAV and E-RSAV}
For the sake of simplifying the presentation, we consider the E-SAV scheme with $\mathcal{L} \equiv 0$ in the univariate case (Noted similar results can be achieved for the E-RSAV scheme by substituting $r^{n+1}$ with $\tilde{r}^{n+1}$):
\begin{align}
&\frac{x^{n+1}-x^{n}}{\Delta t} =-\frac{{r}^{n+1}}{\sqrt{f(x^n)}} f^{\prime}(x^n),
\\
&\frac{{r}^{n+1}-r^{n}}{\Delta t} =\frac{1}{2 \sqrt{f(x^n)}}  f^{\prime}(x^n) \frac{x^{n+1}-x^{n}}{\Delta t}. 
\end{align}
We can derive from the above that $r^{n+1} = \frac{1}{1+\Delta t \frac{ f^{\prime}(x^n)^2}{2 {f(x^n)}}  }r^n =  \frac{2 {f(x^n)}}{2 {f(x^n)}+\Delta t { f^{\prime}(x^n)^2}  }r^n$. Then, E-SAV can be rewritten as the following iterative method:
\begin{equation} \label{super iteration x}
    x^{n+1} = x^{n} - \Delta t  \frac{2 {\sqrt{f(x^n)} r^n}}{2 {f(x^n)}+\Delta t { f^{\prime}(x^n)^2}  } f^{\prime}(x^n)
\end{equation}
with the learning rate  $\eta_n = \Delta t  \frac{2 {\sqrt{f(x^n)} r^n}}{2 {f(x^n)}+\Delta t { f^{\prime}(x^n)^2}  }$.\\
% and call it as the learning rate. The $\Delta t$ is called the step size.\\
Assuming that $x^{\star}$ is the optimal point and denoting $\varepsilon_n = x^{n} - x^{\star}$, we subtract $x^{\star}$ from both sides of the equation \eqref{super iteration x} to derive
\begin{align}\label{numerator}
    \varepsilon_{n+1} &= \varepsilon_{n} - \Delta t  \frac{2 {\sqrt{f(x^n)} r^n}}{2 {f(x^n)}+\Delta t { f^{\prime}(x^n)^2}  } f^{\prime}(x^n)\\
    &= \frac{\varepsilon_{n}\left(2 {f(x^n)}+\Delta t { f^{\prime}(x^n)^2}  \right) - 2\Delta t  {\sqrt{f(x^n)} r^n  f^{\prime}(x^n)}}{2 {f(x^n)}+\Delta t { f^{\prime}(x^n)^2}}.\nonumber 
\end{align}
 Applying the Taylor expansion to $f^{\prime}$ around $x^{n}$, we derive
\begin{equation}
    0  = f^{\prime}(x^{\star}) = f^{\prime}(x^{n}) -   f^{\prime\prime}(x^{n}) \varepsilon_n + \frac{ f^{\prime\prime\prime}(\xi^{\star}_n)}{2}\varepsilon_n^2,
\end{equation}
where $\xi^{\star}_n$ lies between  $x^n$ and $x^{\star}$. Substituting this expression into the numerator of the second equation in \eqref{numerator} yields:
\begin{equation} \label{last equation of epsn1}
\begin{aligned}
    \varepsilon_{n+1}&= \frac{\varepsilon_{n}\left(2 {f(x^n)}+\Delta t { f^{\prime}(x^n)^2}  \right) - 2\Delta t {\sqrt{f(x^n)} r^n  f^{\prime}(x^n)}}{2 {f(x^n)}+\Delta t { f^{\prime}({x^n})^2} }\\
    &= \frac{\varepsilon_{n}\left(2 {f(x^n)}+\Delta t { {\left( f^{\prime\prime}(x^{n}) \varepsilon_n - \frac{ f^{\prime\prime\prime}(\xi^{\star}_n)}{2}\varepsilon_n^2 \right)}^2}  \right)}{2 {f(x^n)}+\Delta t { f^{\prime}(x^n)^2}  } \\
    & -\frac{2\Delta t {\sqrt{f(x^n)} r^n  \left( f^{\prime\prime}(x^{n}) \varepsilon_n - \frac{ f^{\prime\prime\prime}(\xi^{\star}_n)}{2}\varepsilon_n^2 \right)}}{2 {f(x^n)}+\Delta t { f^{\prime}(x^n)^2}  } \\
    &=\frac{\varepsilon_{n}\left(2 {f(x^n)} - 2 \Delta t \sqrt{f(x^n)} r^n  f^{\prime\prime}(x^{n})\right)}{2 {f(x^n)}+\Delta t { f^{\prime}(x^n)^2}  }\\
    &+\frac{\Delta t \varepsilon^2_n\left( { {\left( f^{\prime\prime}(x^{n})- \frac{ f^{\prime\prime\prime}(\xi^{\star}_n)}{2}\varepsilon_n \right)}^2}   + 2 \sqrt{f(x^n)} r^n  \frac{ f^{\prime\prime\prime}(\xi^{\star}_n)}{2}\right) }{2 {f(x^n)}+\Delta t { f^{\prime}(x^n)^2}  }.
    \end{aligned}
\end{equation}
A straightforward approach to obtaining a quadratically convergent algorithm is to set $\Delta t  = \frac{\sqrt{f(x^n)}}{r^n}\frac{1}{f^{\prime\prime}(x^{n}) }$. However, since computing second-order derivatives can be costly or may not be possible, we can instead set \begin{equation}\label{newdt}
\Delta t = \frac{\sqrt{f(x^n)}}{r^n}\frac{x^n - x^{n-1}}{f^{\prime}(x^{n}) - f^{\prime}(x^{n-1}) }.
\end{equation}
Then by taking Taylor expansion of $f^{\prime}(x^{n-1})$ about $x^{n}$, 
\begin{equation}
    f^{\prime}\left(x^{n-1}\right)=f^{\prime}\left(x^n\right)+f^{\prime \prime}\left(x^n\right)\left(\varepsilon_{n-1}-\varepsilon_n\right)+\frac{f^{(3)}(\xi_k^{\ddagger})}{2}\left(\varepsilon_{n-1}-\varepsilon_n\right)^2,
\end{equation}
where $ \xi_n^{\ddagger} \text { lies between } x^{n-1} \text { and } x^n$, we can rewrite $\Delta t$ as follows:
\begin{equation}\label{eq: step size of super c}
    \Delta t = \frac{\sqrt{f(x^n)}}{r^n}\frac{1}{f^{\prime \prime}\left(x^n\right)+\frac{1}{2}{f^{(3)}(\xi_k^{\ddagger})}\left(\varepsilon_{n-1}-\varepsilon_n\right)}.
\end{equation}
By substituting the expression for $\Delta t$ from equation \eqref{eq: step size of super c} into the first term of the numerator in the last equation of \eqref{last equation of epsn1}, we obtain
\begin{equation*}
    \begin{aligned}
    2 {f(x^n)} - 2 \Delta t \sqrt{f(x^n)} r^n  f^{\prime\prime}(x^{n}) & = 2 {f(x^n)} \left( 1 - \frac{f^{\prime \prime}(x^{n})}{f^{\prime \prime}\left(x^n\right)+\frac{1}{2}{f^{(3)}(\xi_k^{\ddagger})}\left(\varepsilon_{n-1}-\varepsilon_n\right)}\right)\\
    &= {f(x^n)} \left( \frac{{f^{(3)}(\xi_k^{\ddagger})}\left(\varepsilon_{n-1}-\varepsilon_n\right)}{f^{\prime \prime}\left(x^n\right)+\frac{1}{2}{f^{(3)}(\xi_k^{\ddagger})}\left(\varepsilon_{n-1}-\varepsilon_n\right)}\right).
\end{aligned}
\end{equation*}
Then the last equation of \eqref{last equation of epsn1} can be expressed as 
\begin{equation*}
\begin{aligned}
     \varepsilon_{n+1} &= \frac{\varepsilon_{n}{f(x^n)} \left( \frac{{f^{(3)}(\xi_k^{\ddagger})}\left(\varepsilon_{n-1}-\varepsilon_n\right)}{f^{\prime \prime}\left(x^n\right)+\frac{1}{2}{f^{(3)}(\xi_k^{\ddagger})}\left(\varepsilon_{n-1}-\varepsilon_n\right)}\right) }{2 {f(x^n)}+\Delta t { f^{\prime}(x^n)^2}  }\\
     &+\frac{\Delta t \varepsilon^2_n\left( { {\left( f^{\prime\prime}(x^{n})- \frac{ f^{\prime\prime\prime}(\xi^{\star}_n)}{2}\varepsilon_n \right)}^2}   + 2 \sqrt{f(x^n)} r^n  \frac{ f^{\prime\prime\prime}(\xi^{\star}_n)}{2}\right) }{2 {f(x^n)}+\Delta t { f^{\prime}(x^n)^2}  }.
\end{aligned}
\end{equation*}
We can then derive that 
\begin{equation}
    \begin{gathered}
\lim _{n \rightarrow \infty} \frac{\left|\varepsilon_n\right|}{\left|\varepsilon_{n-1}\right|}=0,  \\
\lim _{n \rightarrow \infty} \frac{\left|\varepsilon_{n+1}\right|}{\left|\varepsilon_n \varepsilon_{n-1}\right|}=\left(\frac{f^{(3)}\left(x^*\right)}{2 f\left(x^*\right)f^{(2)}\left(x^*\right)}\right)^2,
\end{gathered}
\end{equation}
which implies that the convergence rate of the modified algorithm with $\Delta t$ given by \eqref{newdt} is $\frac{1+\sqrt{5}}{2}$.

\begin{algorithm}
\caption{RSAV with enhanced convergence in univariate case}
\label{alg:E-RSAVelewise-superlinear}
\begin{algorithmic}
\STATE{$\mathbf{Given}$ a starting point ${x}^0 \in \mathbf{dom} f$, a step size $\Delta t$, ${r}^0 = \sqrt{f({x}^0)}$ and set $n = 0$, $\psi \in (0,1)$.}
\STATE{Compute ${x}^1$ and ${r}^1$ with Algorithm \ref{alg:E-RSAVelewise} and update $n = 1$}.
\WHILE{the termination condition is not met}
\STATE{Set $\Delta t = \frac{\sqrt{f(x^n)}}{r^n}\frac{x^n - x^{n-1}}{f^{\prime}(x^{n}) - f^{\prime}(x^{n-1}) }$}
\STATE{Compute $\tilde{{r}}^{n+1} = \frac{2 {f(x^n)}}{2 {f(x^n)}+\Delta t { f^{\prime}(x^n)^2}  }r^n$}
\STATE{Update $x^{n+1} = x^{n} - \Delta t  \frac{\tilde{r}^{n+1}}{\sqrt{f(x^n)}} f^{\prime}(x^n)$}
\STATE{Set $r^{n+1} = \eta \tilde{r}^{n+1} + (1-\eta)\sqrt{f(x^{n+1})}$}
\STATE{Compute $ {\eta}_{0} = \min_{\eta \in [0,1]} \eta,  \text{ such that } ({r}^{n+1})^2 - (\tilde{{r}}^{n+1})^2 \leq \frac{\psi}{\Delta t} \left({{{x}^{n+1}-{x}^{n}}}\right)^2$}
\STATE{Update ${r}^{n+1} = {\eta}_{0} \tilde{{r}}^{n+1} + (1-{\eta}_{0})\sqrt{f({x}^{n+1})} $}
\STATE{Update $n=n+1$}
\ENDWHILE
\RETURN ${x}^{n+1}$
\end{algorithmic}
\end{algorithm}

\subsection{AE-RSAV with Steffensen step size}
To fully take advantage of the unconditional energy dissipation of E-RSAV, we propose below an adaptive version of the algorithm, AE-RSAV. By carefully selecting the step size, we can accelerate the algorithm's performance. We introduce an indicator $\alpha =  \text{mean}(\frac{\boldsymbol{r}^n}{\sqrt{E^n}})$, which demonstrates the ratio of the modified and the original energy. When the ratio is close to 1, we continue to use the current step size; when it deviates significantly from 1, it suggests that the step size needs to be reduced.

For further enhancement in efficacy, we should also consider using a suitable accelerated method. However, 
the accelerated method discussed above is only applicable in the univariate case, as extending 
%\begin{equation} \label{eq: dt}    \Delta t_n = \frac{\sqrt{f(x^n)}}{r^n}\frac{x^n - x^{n-1}}{\nabla f(x^{n}) - \nabla f(x^{n-1}) }\end{equation} 
\eqref{newdt} to the multivariate case is not straightforward. Nevertheless, this method provides us with insight that E-RSAV can be accelerated by choosing an 
appropriate step size.
%especially one that considers gradient information, which is commonly used in gradient descent methods. This satisfies our intuition and motivates the development of adaptive algorithms. 
On the other hand, Steffensen's acceleration method \cite{steffensen1933remarks, steffensen1945further} is applicable to  multivariate cases, so we will adopt the Steffensen step size \cite{zhao2022stochastic} and introduce our adaptive step size: %which is a method that incorporates gradient information among its options and the step size is defined by 
\begin{equation}\label{Steffensen}
    \Delta t_n = \frac{\phi_n\left\|\nabla f\left(\bx_n\right)\right\|^2}{\left[\nabla f\left(\bx_n+\nabla f\left(\bx_n\right)\right)-\nabla f\left(\bx_n\right)\right]^{\top} \nabla f\left(\bx_n\right)},
\end{equation}
where $\phi_n = \frac{\sqrt{f(\bx^n)}}{r^n} \frac{\|\bx^n - \bx^{n-1}\|^2}{(\nabla f(\bx^{n}) - \nabla f(\bx^{n-1}))^T(\bx^n - \bx^{n-1}) }$. We observe that \eqref{Steffensen} captures a significant amount of gradient information\ and is similar to  \eqref{newdt} but applicable to multivariate cases.

Building on the aforementioned premise, we put forth the AE-RSAV algorithm \cref{alg:adapE-RSAV}:
\begin{algorithm}
\caption{AE-RSAV}
\label{alg:adapE-RSAV}
\begin{algorithmic}
\STATE{$\mathbf{Given}$ a starting point $\bx^0 \in \mathbf{dom} f$, a step size $\Delta t_0$, $r^0 = \sqrt{f(\bx^0)}$, the indicator threshold $\beta = 0.1$ and set $n = 0$, $\psi \in (0,1)$.}
\WHILE{the termination condition is not met}
\STATE{Compute the indicator $\alpha =  \text{mean}(\frac{\boldsymbol{r}^n}{\sqrt{E^n}})$}
\IF{$|1-\alpha| > \beta$}
\STATE{$\Delta t_n =\frac{\sqrt{f(\bx^n)}}{r^n}\frac{\|\bx^n - \bx^{n-1}\|^2}{(\nabla f(\bx^{n}) - \nabla f(\bx^{n-1}))^T(\bx^n - \bx^{n-1}) } \frac{\left\|\nabla f\left(\bx_n\right)\right\|^2}{\left[\nabla f\left(\bx_n+\nabla f\left(\bx_n\right)\right)-\nabla f\left(\bx_n\right)\right]^{\top} \nabla f\left(\bx_n\right)}$}
\ENDIF
\STATE{Compute $\tilde{{r}}_i^{n+1} = (1+\frac{\Delta t_n}{2(1 + \Delta t_n\lambda_i^n)f(\boldsymbol{x}^n)}(\frac{\partial f(\boldsymbol{x}^n)}{\partial x_i})^2)^{-1}{r}_i^n$} for $i=1,\cdots,m$
\STATE{Update ${x}_i^{n+1} = {x}_i^n - \Delta t_n (1+\Delta t_n \lambda_i^n)^{-1} \frac{\tilde{{r}}_i^{n+1}}{ \sqrt{f(\boldsymbol{x}^n)}}\frac{\partial f(\boldsymbol{x}^n)}{\partial x_i}$ for $i=1,\cdots,m$}
\STATE{Set $r_i^{n+1} = \eta_{i} \tilde{r}_i^{n+1} + (1-\eta_i)\sqrt{f(\bx^{n+1})}$ for $i=1,\cdots,m$}
\STATE{Compute $ {\eta}_{i0} = \min_{\eta_i \in [0,1]} \eta_i,  \text{ such that } ({r}_i^{n+1})^2 - (\tilde{{r}}_i^{n+1})^2 \leq \frac{\psi}{\Delta t_n} \left({{{x}_i^{n+1}-{x}_i^{n}}}\right)^2$ for $i=1,\cdots,m$}
\STATE{Update ${r}_i^{n+1} = {\eta}_{i0} \tilde{{r}}_i^{n+1} + (1-{\eta}_{i0})\sqrt{f(\boldsymbol{x}^{n+1})} $ for $i=1,\cdots,m$}
\STATE{Update $n=n+1$}
\ENDWHILE
\RETURN $\bx^{n+1}$
\end{algorithmic}
\end{algorithm}
By incorporating Steffensen's step size in the AE-RSAV algorithm, we can improve its performance. However, we should also consider the computational cost of Steffensen method and balance it against the potential performance gains. In practice, the decision of whether to use Steffensen method may depend on the specific problem and available computational resources. %It should be clarified that AE-RSAV provides an algorithm that any choice of step size can be applied in the Steffensen method.

\section{Experimental results}
\label{sec:experiments}
% We will be conducting numerical experiments to test the performance of Algorithm 5.2 and 5.3 since both of them outperform Algorithm 3.1. Algorithm 3.2 will be referred to as E-RSAV, while Algorithm 3.3 will be referred to as AE-RSAV. Our aim is to showcase the effectiveness of the optimizer, and to that end, we began by testing it on both convex and non-convex functions. After this initial testing, we then applied the optimizer in the context of solving the Burger's equation using PINN.
\subsection{Convex functions}
Consider the following minimization problem:
\begin{equation}
\min f(\boldsymbol{x}) = \sum_{i=1}^{N/2} x_{2i-1}^2 + \frac{1}{N} \sum_{i=1}^{N/2} x_{2i}^2,
\end{equation}
where $\boldsymbol{x} = (x_1, x_2, \cdots, x_N)$. Consider the case where $N = 100$. In this scenario, $f(\boldsymbol{x})=\sum_{k=1}^{50} x_{2 k-1}^{2}+\frac{1}{100}\sum_{k=1}^{50} {x_{2 k}^{2}}$. The function $f(\boldsymbol{x})$ is obviously convex. However, the condition number of its Hessian matrix $\mathcal{H}$ is $N$. For large $N$, the Hessian matrix will have a poor condition number, which makes it difficult for the gradient descent method to converge. This is because gradient descent methods are sensitive to the step size, and a poorly conditioned Hessian matrix can cause the method to oscillate or converge slowly.

We consider two variants of our proposed method: E-RSAV and E-RSAVL, where E-RSAV corresponds to $\mathcal{L} = 0$, and E-RSAVL corresponds to $\mathcal{L} = \mathcal{H}$. To evaluate their performance under various step sizes, we compare them with three existing optimization methods: gradient descent (GD), RSAV, and E-SAV with $\mathcal{L} = 0$. Table \ref{tb:loss of quad} presents the loss values obtained by each method after 1000 iterations.

Among the methods considered, E-SAV demonstrates superior performance with small step sizes ($\Delta t = 0.1$ and $\Delta t = 1$). However, as the step size increases to $\Delta t = 10$ and $\Delta t = 20$, E-RSAV outperforms other methods. Notably, E-RSAVL exhibits exceptional convergence at very large step sizes, up to $\Delta t = 55$ in this case. This indicates that the relaxed strategy, employed in E-RSAV and E-RSAVL, plays a crucial role in achieving faster and more accurate convergence when the step size is not too small.

We also examine the performance of each method at its respective best step size. The loss curves are presented in Figure \ref{fig: quad_sav_1}. At each best step size, E-RSAV consistently outperforms the other methods in terms of minimizing loss.
\begin{table}[tbh]
\centering
\resizebox{\linewidth}{!}{
    $\begin{array} {||c|cccccc||}
\hline
\text { Loss } & & \text{GD} & \text{RSAV} & \text{E-SAV} &\text{E-RSAV} & \text{E-RSAVL}\\
\hline \hline \Delta t = 0.1  &  & \bm{0.0091} & 0.0129 & 1.98 \times 10^{-13} & 0.0091 & 0.0178 \\
\hline \Delta t = 1 &  & 50.0 & \bm{1.77 \times 10^{-9}} & 1.70 \times 10^{-32} & 1.41 \times 10^{-18} & 7.74 \times 10^{-7}\\
\hline\Delta t = 10  &  & \text{NAN} & 0.2247 & \bm{< 2.23\times 10^{-308}} & < 2.23\times 10^{-308} & 2.84 \times 10^{-9}\\
\hline \Delta t = 20 &  & \text{NAN} & 0.2510 & 1.76 \times 10^{-7} & \bm{< 2.23\times 10^{-308}} & 1.68 \times 10^{-9} \\
\hline \Delta t = 30 &  &  \text{NAN} & 0.2835 & 535.6 & 522.9 & \bm{1.36 \times 10^{-9}} \\
\hline
  \end{array}$
  }
\caption{The loss of the convex function $f(\boldsymbol{x})$. The table presents the error of $f(\boldsymbol{x})$ after 1000 iterations. The global minimum $\boldsymbol{x}^{\star}$ is $\boldsymbol{0}$, and $f(\boldsymbol{x}^{\star}) = 0$. The loss is computed by $|f(\boldsymbol{x})-f(\boldsymbol{x}^{\star})|$. ``NAN" represents the method blowing up after some iterations. The value $< 2.23\times 10^{-308}$ occurs when the value is smaller than the minimum positive value represented by the double precision data type.}
\label{tb:loss of quad}
\end{table}
\begin{figure}[htbp]
     \centering
         \includegraphics[width=0.7\textwidth]{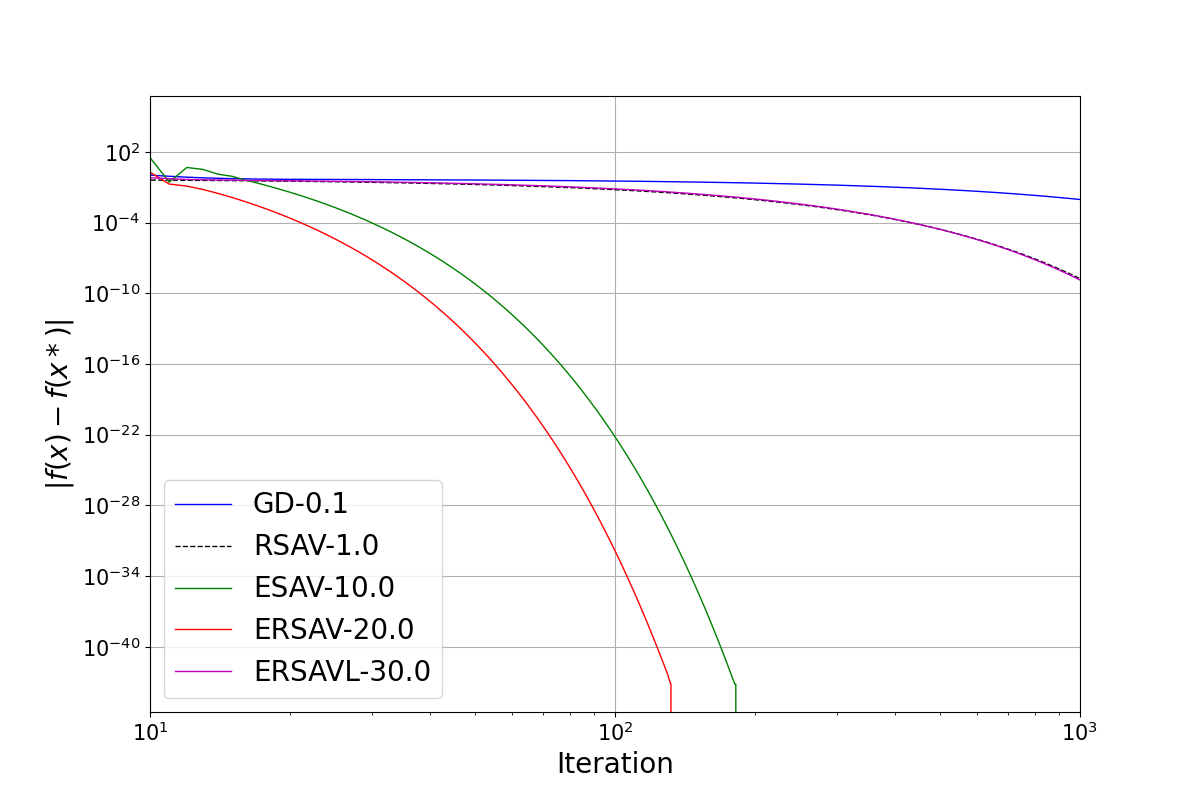}
    \caption{The loss of the convex function $f(\boldsymbol{x})$ at different step sizes. It shows the comparison of loss curves for different optimization methods at their respective best step sizes, for example, the best step size for GD is $\Delta t = 0.1$. It also illustrates the loss curves for gradient descent (GD), RSAV, E-SAV, E-RSAV, and E-RSAVL on the quadratic function. E-RSAV consistently outperforms the other methods, achieving lower loss values at each best step size.}
    \label{fig: quad_sav_1}
\end{figure}
\subsection{Non-convex functions}
We demonstrated the superiority of E-RSAV over GD, RSAV, and E-SAV for convex functions. To test the performance of AE-RSAV, we consider a non-convex Rosenbrock function with  $\mathcal{L} = 0$ and compare it with GD and E-SAV. The objective function is given by
\begin{equation}
f\left(x_{1}, x_{2}\right)=\left(1-x_{1}\right)^{2}+100\left(x_{2}-x_{1}^{2}\right)^{2}
\end{equation}
with the global minimum at $x^{\star} = (1,1)$ and the minimal value of $\boldsymbol{0}$. The initial point for the numerical experiment was set to $(-2,-4)$. We conduct a performance comparison between AE-RSAV, GD, and E-SAV using a small step size of $\Delta t = 3\times 10^{-4}$ and a larger step size of $\Delta t = 1.5\times 10^{-3}$. Figure \ref{Nonconvex_gd_rsav_dt0003} illustrates the error curves after 20000 iterations, where the indicator threshold of AE-RSAV is $\beta = 0.0001$. With both step sizes, AE-RSAV outperforms the other methods. Furthermore, we plot the trajectories of all three methods using $\Delta t = 1.5\times 10^{-3}$ in Figure \ref{traj}, with markers placed every 500 steps. The trajectories reveal that GD's iteration deviates to the wrong direction, while E-SAV exhibits oscillation during iterations. In contrast, AE-RSAV accurately approximates the optimal point rapidly. 

The decision to use a stricter threshold $\beta$  is to ensure a closer approximation between the modified and the original energy. Figure \ref{NonConvex_energy_adapE-RSAV} presents the energy of AE-RSAV with $\Delta t = 1.5\times 10^{-3}$, demonstrating a consistent overlap between the modified and the original energy throughout the iterations. To provide a clearer view, we zoomed in on the figure from step 5 to step 99. It is evident from the plot that the modified energy approximates the original energy very well.
\begin{figure}[htbp]
     \centering
     \includegraphics[width=0.7\textwidth]{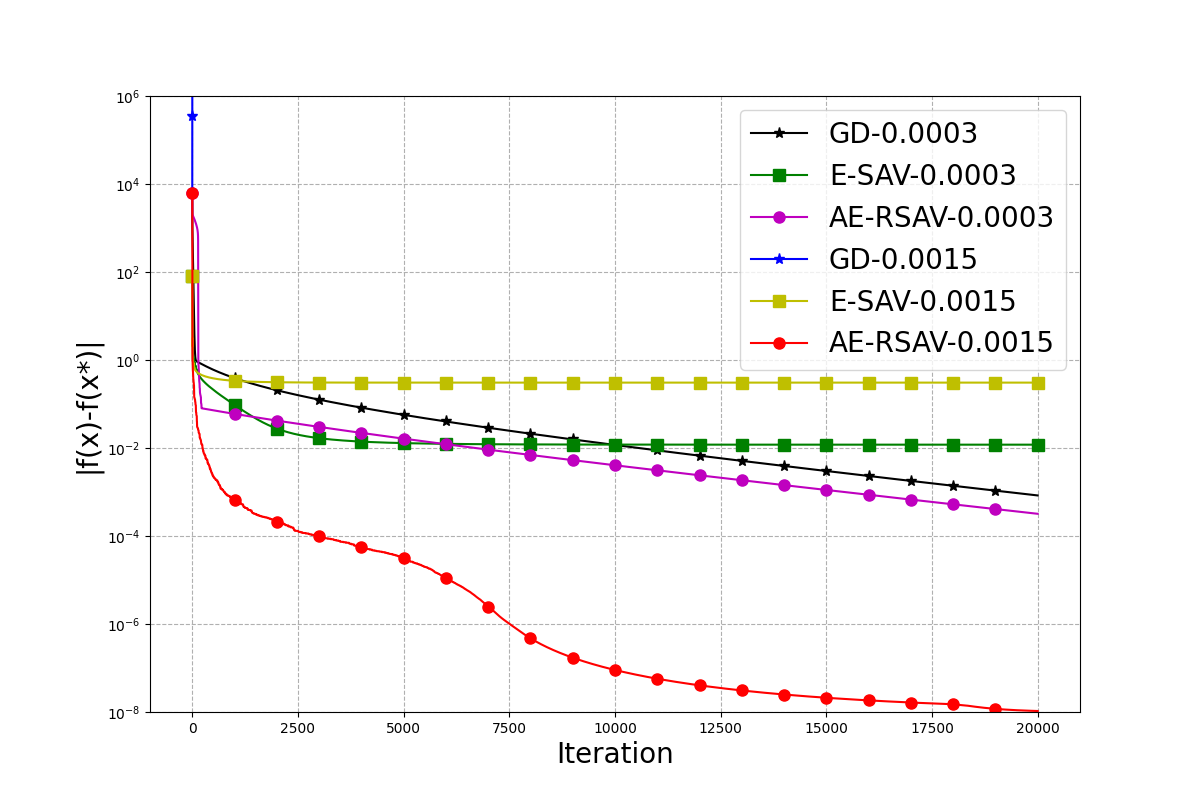}
    \caption{Non-convex function: The figure shows the values of $|f(\bx) - f(\bx^*)|$ of GD, E-SAV and AE-RSAV under the step size $\Delta t = 3\times 10^{-4}$ and $\Delta t = 1.5\times 10^{-3}$.}
    \label{Nonconvex_gd_rsav_dt0003}
\end{figure}
\begin{figure}[htbp]
     \centering
     \begin{subfigure}{0.3\textwidth}
         \centering
         \includegraphics[width=\textwidth]{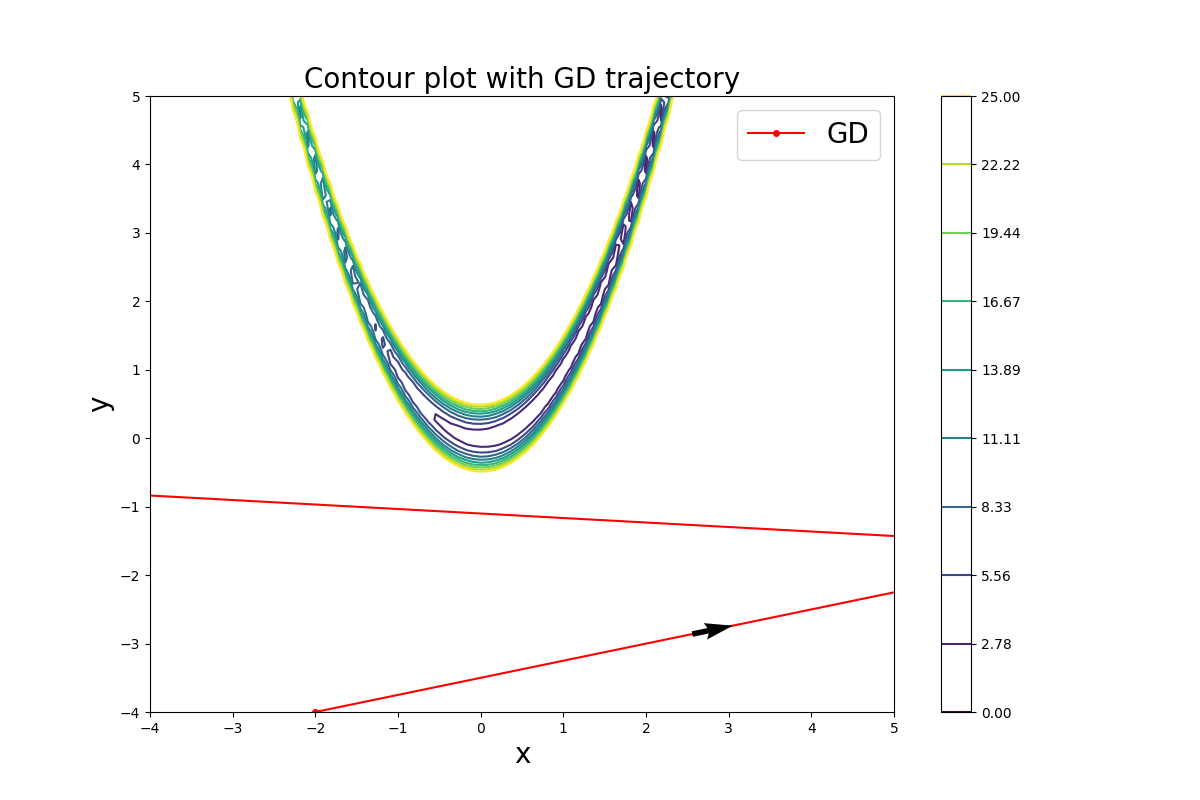}
         \caption{Trajectory of GD}
         \label{traj_gd.png}
     \end{subfigure}
     \begin{subfigure}{0.3\textwidth}
         \centering
         \includegraphics[width=\textwidth]{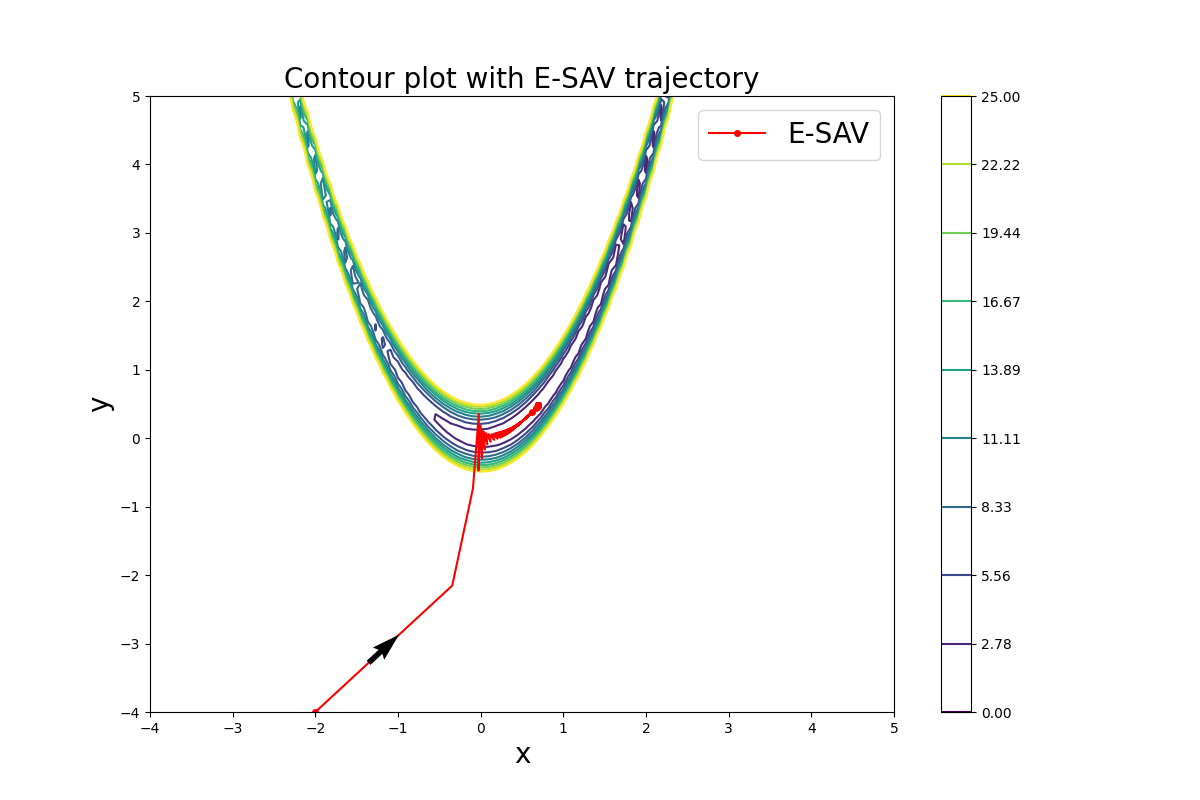}
         \caption{Trajectory of E-SAV}
         \label{traj_sav}
     \end{subfigure}
     %\hfill
     \begin{subfigure}{0.3\textwidth}
         \centering
         \includegraphics[width=\textwidth]{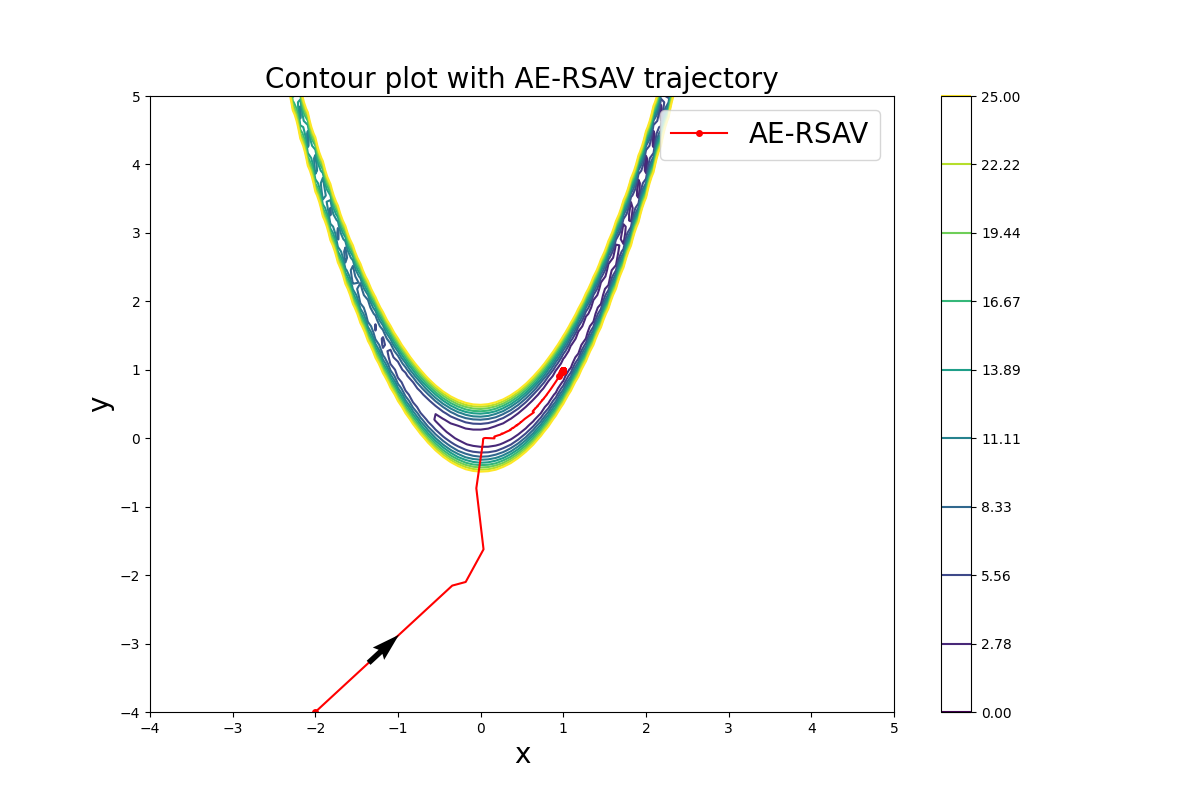}
         \caption{Trajectory of AE-RSAV}
         \label{traj_adapE-RSAV}
     \end{subfigure}
    \caption{Trajectories of iterative points of the non-convex function: The trajectories of iterative points of a non-convex function are shown, with the optimal point being (1,1). The red line depicts the trajectory of GD, E-SAV, and AE-RSAV, while the black arrow indicates the direction of iterative points.}
    \label{traj}
\end{figure}
\begin{figure}[htbp]
     %\hfill
         \centering
         \includegraphics[width=0.7\textwidth]{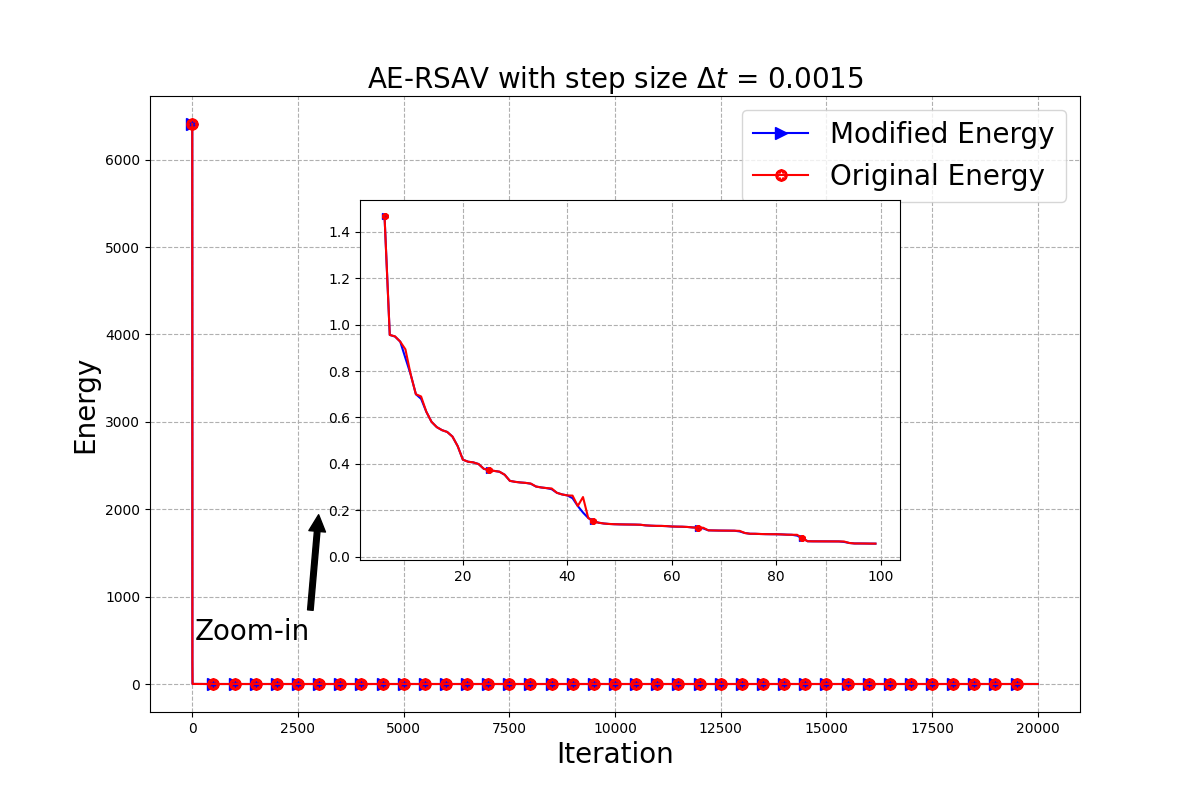}
    \caption{The figure illustrates a comparison of the modified and the original energy when using AE-RSAV with a step size of $\Delta t = 1.5\times 10^{-3}$. A closer look at the energy values is provided between steps 5 and 99 for enhanced clarity. The results indicate a close resemblance between the modified and the original energy.}
     \label{NonConvex_energy_adapE-RSAV}
\end{figure}
% \begin{figure}[tbh]
%          \centering
%          \includegraphics[width=\textwidth]{}
%          \label{Nonconvex_gd_rsav_dt0003}
%     \caption{Nonconvex function: The values of $\log_{10}(f(x^n))$ of gradient descent, AEGD and E-RSAV under the learning rate $\Delta t = 3\times 10^{-4}$. It is clear that the solution obtained by gradient descent and E-RSAV converge to the minima under the small learning rate, but AEGD fails. }
% \end{figure}
% \begin{figure}[tbh]
%          \centering
%          \includegraphics[width=\textwidth]{}
%          \label{Nonconvex_gd_rsav_dt0015}
%     \caption{Nonconvex function: The values of $\log_{10}(f(x^n))$ of  AEGD and E-RSAV under the learning rate $\Delta t = 1.5\times 10^{-3}$. Under the large learning rate, gridient descent encounters overflow in a few steps. Hence, we only compare AEGD and E-RSAV in this case. Again, E-RSAV can converge to the minima, but AEGD fails.}
% \end{figure}
\subsection{Burger's Equations}
Next, we explore using  Physics-Informed Neural Networks (PINN)\cite{raissi2019physics} to solve Burger's equation with  AE-RSAV. Applying AE-RSAV to PINN can lead to highly accurate solutions of Burger's equation. %We will compare the performance of the E-RSAV and the  adaptive  version of RSAV in this example. 
%Burger's equation is a challenging non-linear partial differential equation used to model the one-dimensional flow of viscous fluid, which makes it difficult to find exact solutions using traditional analytical methods.
We consider the Burger's equation 
\begin{equation}
\begin{aligned}
         &u_t + (u u_x) - \left(\frac{0.01}{\pi}  u_{xx}\right) = 0\\
         &u(x,0) = \sin(\pi x), \quad x\in [-1,1].
\end{aligned}
\end{equation}
%PINN is a machine learning technique that uses neural networks to learn the underlying physics of a system. 
Our implementation of PINN  consists of a simple dense network with 8 hidden layers, 20 neurons in each layer, and a total of 3441 trainable parameters. The activation function used is $\tanh$, and the input and output dimensions are 2 and 1, respectively. For the initial condition, we use 100 samples, and for the collocation points in the domain, we use 10000 uniformly sampled points.

To demonstrate the effectiveness of AE-RSAV, we compare its performance to that of gradient descent (GD) and the adaptive version of RSAV using the default step size $\Delta t = 0.01$. With AE-RSAV, we can use a larger step size, and in this case, we use $\Delta t = 0.05$. Our goal is to show that AE-RSAV works better even with a larger step size.

In Figure \ref{burgerlossrs}, we compare the training loss during the iteration. We observe that AE-RSAV is much more stable compared to GD and  RSAV, as GD  displays large oscillations and RSAV appears to be stuck at a certain point. Additionally, the final training loss of AE-RSAV is substantially smaller than that of GD. To be precise, AE-RSAV yields a final training loss of 0.001332, whereas the final training loss of GD is 0.008827. Figure \ref{spacetime} shows the comparison between the reference solution obtained by the spectral method and the solution obtained by GD,  RSAV, and AE-RSAV. To provide a clear comparison, we focus on the curve when $t = 0.4$ and compare the reference solution with GD in Figure \ref{burgers_comp2}. As Burger's equation is challenging due to its non-linearity, GD and RSAV struggle to produce accurate solutions. However, the AE-RSAV algorithm significantly outperforms GD and RSAV, demonstrating its effectiveness in solving challenging non-linear problems like Burger's equations. 
% \begin{figure}[tbh]
%      \centering
%      \begin{subfigure}{0.48\textwidth}
%          \centering
%          \includegraphics[width=\textwidth]{}
%          \caption{GD}
%          \label{burgerlossgd}
%      \end{subfigure}
%      %\hfill
%      \begin{subfigure}{0.48\textwidth}
%          \centering
%          \includegraphics[width=\textwidth]{}
%          \caption{AE-RSAV}
%          \label{burgerlossrs}
%      \end{subfigure}
%     \caption{Burger's equation: Figure 7-(a) shows the training loss of GD and Figure 7-(b) shows the training loss of AE-RSAV.}
% \end{figure}
\begin{figure}[tbh]
         \centering
         \includegraphics[width=\textwidth]{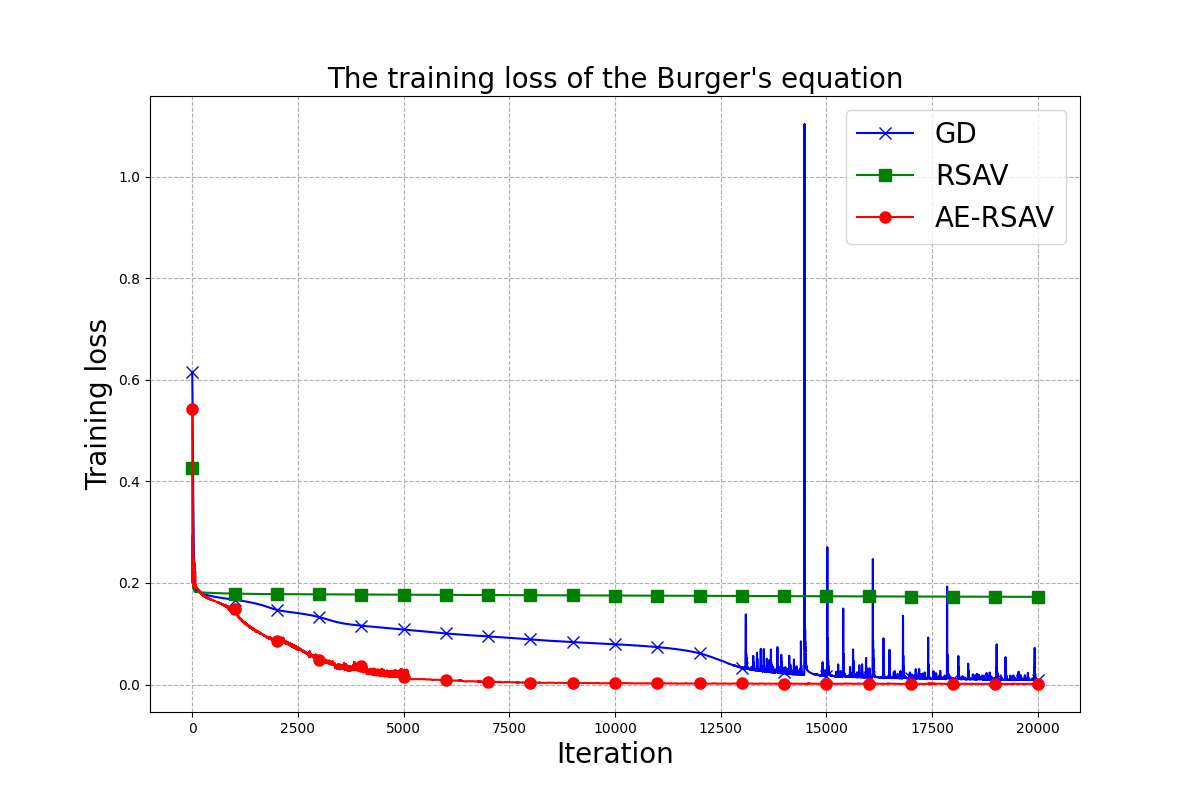}
    \caption{Training loss of the Burger's equation: The training loss plot of the Burger's equation shows that AE-RSAV has a more stable training loss compared to GD, which has large oscillations. Moreover, the final training loss of AE-RSAV is significantly smaller than that of GD. Specifically, the final training loss of AE-RSAV is 0.001332, whereas the final training loss of GD is 0.008827.}
    \label{burgerlossrs}
\end{figure}
\begin{figure}[tbh]
     \centering
     \begin{subfigure}{0.4\textwidth}
         \centering
         \includegraphics[width=\textwidth]{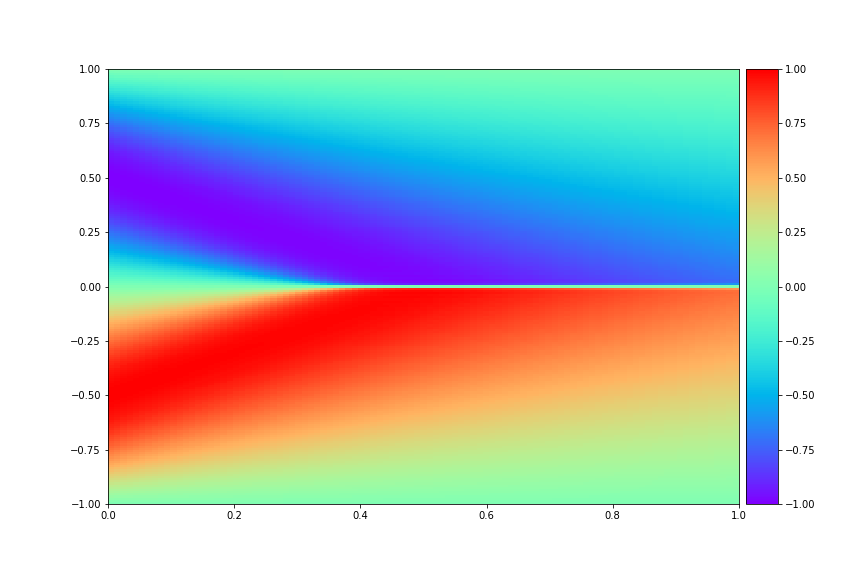}
         \caption{Reference Solution}
         \label{spacetime_burg_refer}
     \end{subfigure}
     %\hfill
     \begin{subfigure}{0.4\textwidth}
         \centering
         \includegraphics[width=\textwidth]{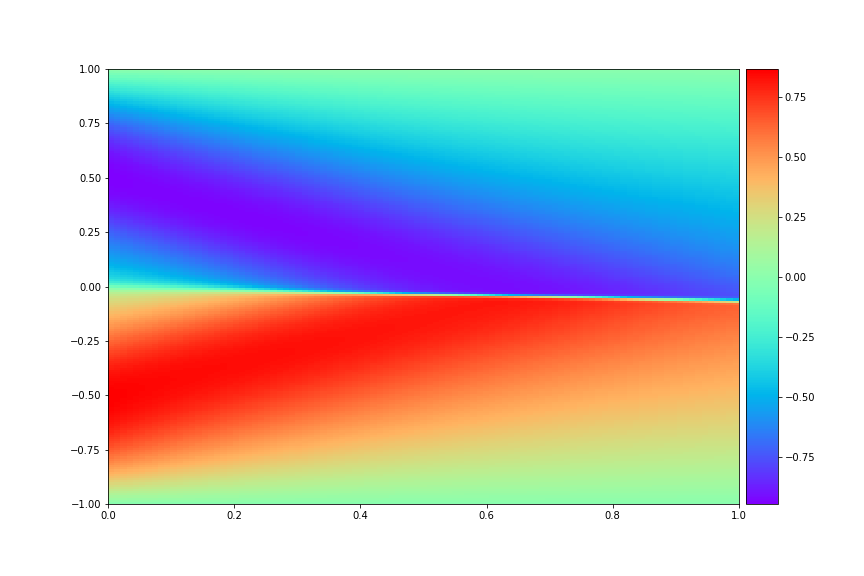}
         \caption{GD}
         \label{spacetime_burg}
     \end{subfigure}
    \begin{subfigure}{0.4\textwidth}
         \centering
         \includegraphics[width=\textwidth]{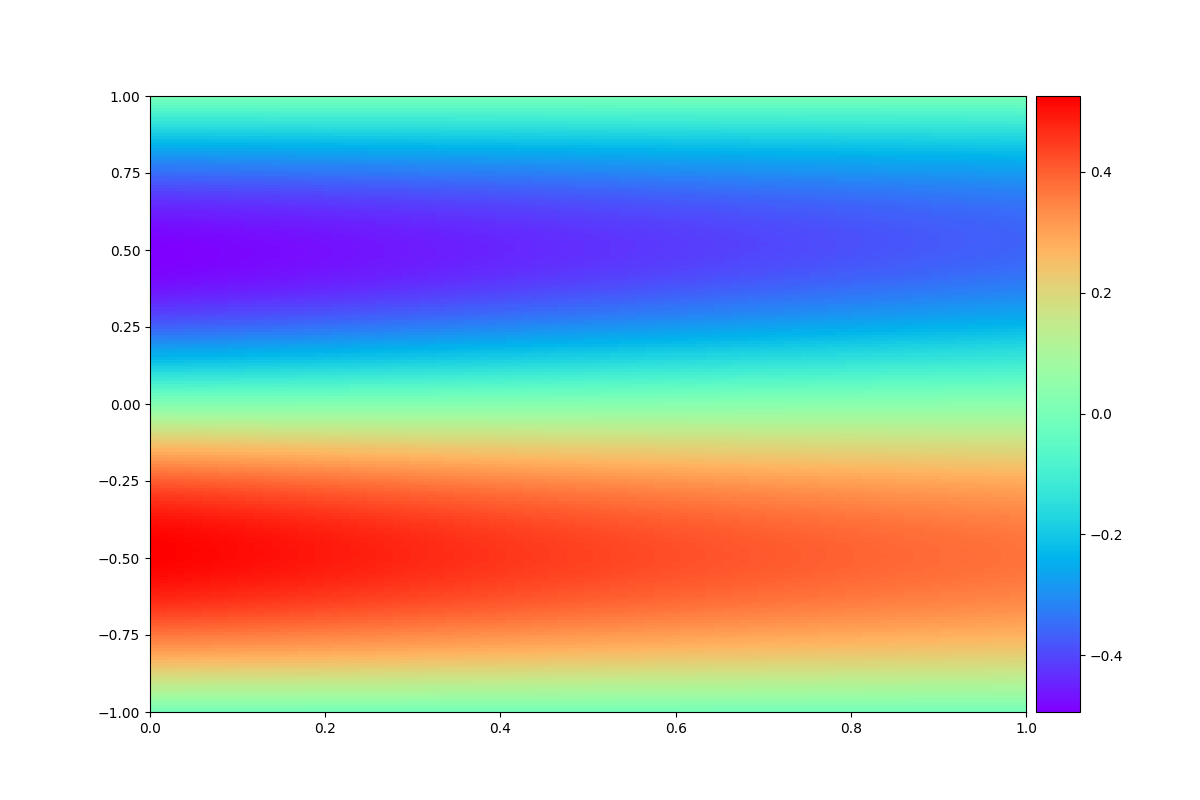}
         \caption{RSAV}
         \label{spacetime_burg_rsavliu}
     \end{subfigure}
     \begin{subfigure}{0.4\textwidth}
         \centering
         \includegraphics[width=\textwidth]{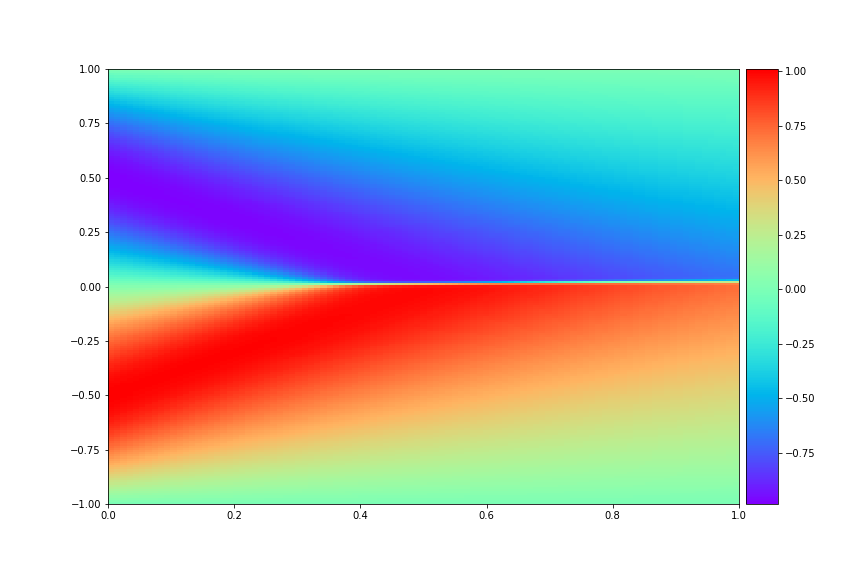}
         \caption{AE-RSAV}
         \label{spacetime_burg_rs}
     \end{subfigure}
    \caption{The predicted solutions of Burger's equations: Figure \ref{spacetime_burg_refer} shows the reference solution obtained by the pseudo-spectral method. Figure \ref{spacetime_burg} shows the solution obtained by GD. Figure \ref{spacetime_burg_rsavliu} shows the solution obtained by RSAV. Figure \ref{spacetime_burg_rs} shows the solution obtained by AE-RSAV. The x-axis represents the time variable $t$, the y-axis represents the space variable $x$, whereas the color intensity corresponds to the value of $f(x,t)$.}
    \label{spacetime}
\end{figure}
\begin{figure}[tbh]
         \centering
         \includegraphics[width=\textwidth]{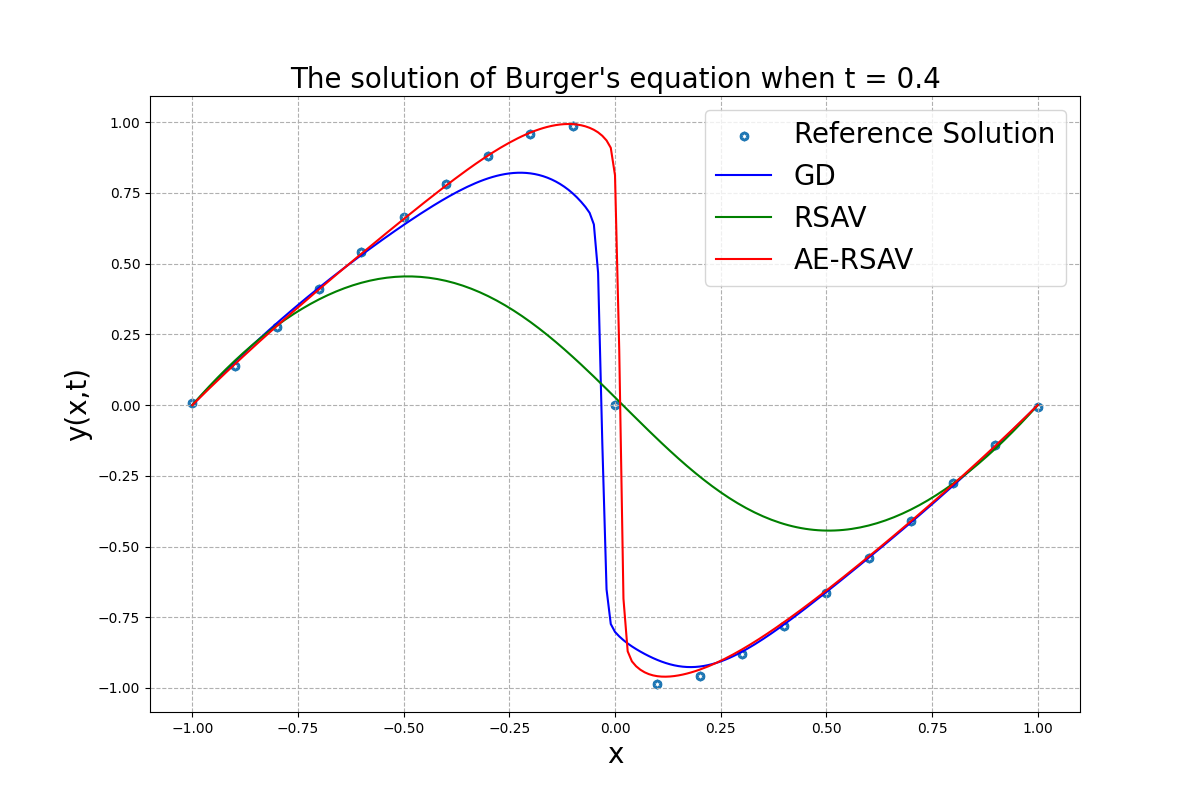}
         \caption{The predicted solutions of Burger's equations: Comparison of GD, RSAV, AE-RSAV with the reference solution at $t = 0.4$. AE-RSAV predicts more accurately even at the sharp location.}
         \label{burgers_comp2}
\end{figure}

\subsection{Super linear convergence in univariate case}
Finally, we can focus on demonstrating the super linear convergence in the context of univariate optimization of Algorithm \ref{alg:E-RSAVelewise-superlinear}, employing two distinct yet simple function forms for illustration purposes. We remain cognizant that exploration of the multivariate scenario constitutes a rich area for further investigation.

The two functions selected for this study are $f(x) = \frac{1}{3} x^3 - 100 x + 1000$, defined on the interval $[0,20]$, and $g(x) = \left(\sin(x) - \frac{1}{2}\right)^2 + 5$, defined on the interval $[-1,2]$. The function $f(x)$ exhibits a minimum at $x=10$, whereas the function $g(x)$ is minimized at $x=\frac{\pi}{6}$.

Our previously derived results inform us that the convergence rate, here denoted by $q$, is given by the $\frac{1+\sqrt{5}}{2}$. This relationship can be expressed in the context of error at each iteration as $\varepsilon_{n+1}=C \varepsilon_n^q$. It is equivalent to stating that $q=\frac{\ln \left(\varepsilon_{n+1} / \varepsilon_n\right)}{\ln \left(\varepsilon_n / \varepsilon_{n-1}\right)}$. To simplify notation, we introduce $q_n=\frac{\ln \left(\varepsilon_{n+1} / \varepsilon_n\right)}{\ln \left(\varepsilon_n / \varepsilon_{n-1}\right)}$, to represent the rate of convergence at each step.

To elucidate these concepts, we tabulate the aforementioned variables in Table \ref{tb:convergence rate}. As the optimization process gravitates towards the minimizer, the empirical convergence rate approximates 1.6, corroborating the theoretical superlinear convergence predicted in our analysis.
\begin{table}[tbh]
\centering
\resizebox{\linewidth}{!}{
    $\begin{array} {||c|ccc||c|cc||}
\hline
\text { f(x) } & & \varepsilon_n & q_n & \text { g(x) } & \varepsilon_n & q_n\\
\hline \hline n = 1  &  & 0.4931 & - & n = 1 & 0.1096 & - \\
\hline n = 2 &  & 0.1604 & 2.4921 & n = 2 & 0.0201 & 1.4949\\
\hline n = 3  &  & 0.0098 & 1.7382 & n = 3 & 0.0016 &1.6101\\
\hline n = 4 &  & 7.54\times 10^{-5} & 1.5673 & n = 4 &  2.70\times 10^{-5} &1.6140 \\
\hline n = 5 &  &  3.69\times 10^{-8} & 1.6385 & n = 5  & 3.72 \times 10^{-8} &  1.6192 \\
\hline n = 6 &  &  1.39\times 10^{-13} & - &  n = 6 & 8.68 \times 10^{-13} & - \\
\hline
  \end{array}$
  }
\caption{This table enumerates the values of $\varepsilon$ and the derived convergence rate, $q$, at each iteration, n, for the optimization of functions $f(x)$ and $g(x)$. The functions under study are $f(x) = \frac{1}{3} x^3 - 100x + 1000$ and $g(x) = \left(\sin(x) - \frac{1}{2}\right)^2 + 5$. The calculated rates of convergence, denoted by $q_n$, provide an empirical validation of the superlinear convergence observed in these scenarios.}
\label{tb:convergence rate}
\end{table}
\section{Conclusions}
\label{sec:conclusions}
We proposed a new optimization algorithm, element-wise SAV with relaxation (E-RSAV), that satisfies an unconditionally energy dissipation law and possesses excellent convergence properties. We provided rigorous proofs for its linear convergence rate in the convex setting and proposed an improved algorithm which is shown to have a super-linear convergence rate in the univariate case. We also proposed an adaptive version of the E-RSAV (AE-RSAV) which combines the advantages of E-RSAV with adaptive step size based on Steffensen's method. %Through numerical experiments, we demonstrated that our algorithm is more robust and can achieve higher accuracy than traditional GD and RSAV methods.
%Our contributions not only advance the field of optimization but also have practical applications in various areas, such as machine learning, material science, and fluid dynamics. 
The unconditional energy dissipation property of our algorithm is particularly useful in ensuring the stability of the optimization process. Our numerical results for convex/non-convex optimizations and for using PINN to solve Burger's equation demonstrate that the AE-RSAV algorithm outperforms some existing optimization methods, providing a new and powerful tool for solving optimization problems.
It is hoped that our contributions will serve as a catalyst for continued exploration in this field and make a significant impact on the development of optimization algorithms for solving complex problems in machine learning, material science, and fluid dynamics.

\section*{Acknowledgments}
This work is partially supported by the National Science Foundation (DMS-2053746, DMS-2134209, ECCS-2328241, and OAC-2311848), and U.S. Department of Energy (DOE) Office of Science Advanced Scientific Computing Research program DE-SC0021142 and DE-SC0023161.

\bibliographystyle{siamplain}
\bibliography{references}
\end{document}

% --- supplement: ex_supplement.tex ---

\maketitle

\section{A detailed example}

Here we include some equations and theorem-like environments to show
how these are labeled in a supplement and can be referenced from the
main text.
Consider the following equation:
\begin{equation}
  \label{eq:suppa}
  a^2 + b^2 = c^2.
\end{equation}
You can also reference equations such as \cref{eq:matrices,eq:bb} 
from the main article in this supplement.

\lipsum[100-101]

\begin{theorem}
An example theorem.
\end{theorem}

\lipsum[102]
 
\begin{lemma}
An example lemma.
\end{lemma}

\lipsum[103-105]

Here is an example citation: \cite{KoMa14}.

\section[Proof of Thm]{Proof of \cref{thm:bigthm}}
\label{sec:proof}

\lipsum[106-112]

\section{Additional experimental results}
\Cref{tab:smfoo} shows additional
supporting evidence. 

\begin{table}[htbp]
\footnotesize
  \caption{Example table.}\label{tab:smfoo}
\begin{center}
  \begin{tabular}{|c|c|c|} \hline
   Species & \bf Mean & \bf Std.~Dev. \\ \hline
    1 & 3.4 & 1.2 \\
    2 & 5.4 & 0.6 \\ \hline
  \end{tabular}
\end{center}
\end{table}

\bibliographystyle{siamplain}
\bibliography{references}